\documentclass[reqno]{amsart}

\usepackage[letterpaper,top=2cm,bottom=2cm,left=3cm,right=3cm,marginparwidth=2.0cm]{geometry}

\usepackage[linesnumbered,ruled,vlined]{algorithm2e}

\usepackage{amsmath,amsthm,amssymb,mathtools, mathrsfs,booktabs} 
\usepackage[english]{babel}
\usepackage[utf8]{inputenc}
\usepackage{microtype} 
\usepackage{csquotes}
\usepackage{graphicx}
\usepackage{ytableau}
\usepackage{tikz}
\usepackage{verbatim}
\usetikzlibrary{decorations.pathreplacing}
\usepackage{todonotes}
\presetkeys{todonotes}{inline}{}
\usepackage{arydshln}

\usepackage[colorlinks=true, allcolors=blue,pdftex]{hyperref}

\usepackage{xcolor}
\newcommand{\Lilan}{\color{black}}

\theoremstyle{definition}
\newtheorem{example}{Example}
\newtheorem{definition}{Definition}
\newtheorem{remark}{Remark}

\theoremstyle{plain}
\newtheorem{theorem}{Theorem}
\newtheorem{lemma}{Lemma}
\newtheorem{proposition}{Proposition}
\newtheorem{corollary}{Corollary}

\definecolor{lightblue}{RGB}{173, 216, 230}
\definecolor{skyblue}{RGB}{135, 206, 255}
\definecolor{pastelblue}{RGB}{176, 224, 230}
\definecolor{iceblue}{RGB}{200, 230, 250}

\definecolor{mintgreen}{RGB}{144, 238, 144}     
\definecolor{pastelgreen}{RGB}{152, 251, 152}   
\definecolor{softgreen}{RGB}{193, 255, 193} 
\definecolor{lemonyellow}{RGB}{255, 250, 205}
\definecolor{peachpink}{RGB}{255, 218, 185}
\definecolor{lavender}{RGB}{230, 230, 250}
\definecolor{aquablue}{RGB}{175, 238, 238}

\newcommand{\defin}[1]{%
\relax\ifmmode%
\textcolor{blue}{#1}%
\else\textcolor{blue}{\emph{#1}}%
\fi%
}

\newcommand{\NN}{\mathbb{N}}
\newcommand{\QQ}{\mathbb{Q}}
\newcommand{\ZZ}{\mathbb{Z}}

\newcommand{\Rset}{\mathcal R_{k,N}}
\newcommand{\uvec}{\mathbf{u}}

\newcommand{\wt}{\operatorname{wt}}
\newcommand{\tr}{\operatorname{tr}}

\DeclareMathOperator{\rowMap}{\mathtt{rowDiv}}

\newcommand{\monomial}{\mathrm{m}}
\newcommand{\powersum}{\mathrm{p}}
\newcommand{\completeH}{\mathrm{h}}
\newcommand{\elementaryE}{\mathrm{e}}

\makeatletter

\ExplSyntaxOn

\seq_new:N \l__rht_rows_seq
\int_new:N \l__rht_rowcount_int
\bool_new:N \l__rht_french_bool
\bool_new:N \l__rht_dot_bool
\tl_new:N \l__rht_scale_tl
\tl_new:N \l__rht_currow_tl
\tl_new:N \l__rht_curdigit_tl
\tl_new:N \l__rht_neighbor_tl
\int_new:N \l__rht_curcols_int

\NewDocumentCommand{\rimHookTableau}{ O{} m }
{
  \tl_set:Nn \l__rht_scale_tl { 1.0 }
  \bool_set_false:N \l__rht_french_bool
  \bool_set_false:N \l__rht_dot_bool
  \keys_set:nn { rht } { #1 }
  \__rht_parse_and_draw:n { #2 }
}

\keys_define:nn { rht }
{
  scale .tl_set:N = \l__rht_scale_tl,
  French .bool_set:N = \l__rht_french_bool,
  Dot .bool_set:N = \l__rht_dot_bool,
}

\cs_new_protected:Nn \__rht_parse_and_draw:n
{
  \seq_set_split:Nnn \l__rht_rows_seq { , } { #1 }
  \int_set:Nn \l__rht_rowcount_int { \seq_count:N \l__rht_rows_seq }
  \__rht_store_cells:
  \__rht_build_commands:
  \__rht_draw:
}

\cs_new_protected:Nn \__rht_store_cells:
{
  \int_step_inline:nn { \l__rht_rowcount_int }
  {
    \tl_set:Nx \l__rht_currow_tl { \seq_item:Nn \l__rht_rows_seq { ##1 } }
    \int_set:Nn \l__rht_curcols_int { \tl_count:N \l__rht_currow_tl }
    \tl_gset:cx { g__rht_rowlen_##1 } { \int_use:N \l__rht_curcols_int }
    \int_step_inline:nn { \l__rht_curcols_int }
    {
      \tl_gset:cx { g__rht_cell_##1_####1 } 
        { \tl_item:Nn \l__rht_currow_tl { ####1 } }
    }
  }
}

\cs_new:Nn \__rht_get_cell:nn
{
  \tl_if_exist:cTF { g__rht_cell_#1_#2 }
    { \tl_use:c { g__rht_cell_#1_#2 } }
    { }
}

\cs_new:Nn \__rht_get_rowlen:n
{
  \tl_if_exist:cTF { g__rht_rowlen_#1 }
    { \tl_use:c { g__rht_rowlen_#1 } }
    { 0 }
}

\cs_new:Nn \__rht_ycoord:n
{
  \bool_if:NTF \l__rht_french_bool
    { \int_eval:n { \l__rht_rowcount_int - #1 + 1 } }
    { #1 }
}

\cs_new_protected:Nn \__rht_build_commands:
{
  \gdef\rhtcmds{}
  
  \int_step_inline:nn { \l__rht_rowcount_int }
  {
    \int_step_inline:nn { \__rht_get_rowlen:n { ##1 } }
    { \__rht_add_cell_cmd:nn { ##1 } { ####1 } }
  }
  
  \int_step_inline:nn { \l__rht_rowcount_int }
  {
    \int_step_inline:nn { \__rht_get_rowlen:n { ##1 } }
    { \__rht_add_connection_cmd:nn { ##1 } { ####1 } }
  }
  
  \bool_if:NT \l__rht_dot_bool
  {
    \int_step_inline:nn { \l__rht_rowcount_int }
    {
      \int_step_inline:nn { \__rht_get_rowlen:n { ##1 } }
      { \__rht_add_dot_cmd:nn { ##1 } { ####1 } }
    }
  }
}

\cs_new_protected:Nn \__rht_add_cell_cmd:nn
{
  \tl_set:Nx \l__rht_curdigit_tl { \__rht_get_cell:nn{#1}{#2} }
  \str_if_eq:VnF \l__rht_curdigit_tl { 0 }
  {
    \edef\rht@tempa{\int_eval:n{#2-1}}
    \edef\rht@tempb{\int_eval:n{-\__rht_ycoord:n{#1}}}
    \exp_args:NNx \g@addto@macro \rhtcmds { \exp_not:N \rhtcell{\rht@tempa}{\rht@tempb} }
  }
}

\cs_new_protected:Nn \__rht_add_connection_cmd:nn
{
  \tl_set:Nx \l__rht_curdigit_tl { \__rht_get_cell:nn{#1}{#2} }
  \tl_if_empty:NF \l__rht_curdigit_tl
  {
    \str_if_eq:VnF \l__rht_curdigit_tl { 0 }  
    {
      \int_compare:nNnT {#2} < { \__rht_get_rowlen:n{#1} }
      {
        \tl_set:Nx \l__rht_neighbor_tl { \__rht_get_cell:nn{#1}{\int_eval:n{#2+1}} }
        \tl_if_eq:NNT \l__rht_curdigit_tl \l__rht_neighbor_tl
        { \__rht_add_hline_cmd:nn{#1}{#2} }
      }

      \bool_if:NTF \l__rht_french_bool
      {
        \int_compare:nNnT {#1} > {1}
        {
          \int_compare:nNnT {#2} < { \int_eval:n{ \__rht_get_rowlen:n{\int_eval:n{#1-1}} + 1 } }
          {
            \tl_set:Nx \l__rht_neighbor_tl { \__rht_get_cell:nn{\int_eval:n{#1-1}}{#2} }
            \tl_if_eq:NNT \l__rht_curdigit_tl \l__rht_neighbor_tl
            { \__rht_add_vline_cmd:nn{#1}{#2} }
          }
        }
      }
      {
        \int_compare:nNnT {#1} < {\l__rht_rowcount_int}
        {
          \int_compare:nNnT {#2} < { \int_eval:n{ \__rht_get_rowlen:n{\int_eval:n{#1+1}} + 1 } }
          {
            \tl_set:Nx \l__rht_neighbor_tl { \__rht_get_cell:nn{\int_eval:n{#1+1}}{#2} }
            \tl_if_eq:NNT \l__rht_curdigit_tl \l__rht_neighbor_tl
            { \__rht_add_vline_cmd:nn{#1}{#2} }
          }
        }
      }
    }
  }
}

\cs_new_protected:Nn \__rht_add_hline_cmd:nn
{
  \edef\rht@tempa{\int_eval:n{#2}}
  \edef\rht@tempb{\int_eval:n{-\__rht_ycoord:n{#1}}}
  \exp_args:NNx \g@addto@macro \rhtcmds { \exp_not:N \rhthline{\rht@tempa}{\rht@tempb} }
}

\cs_new_protected:Nn \__rht_add_vline_cmd:nn
{
  \edef\rht@tempa{\int_eval:n{#2}}
  \edef\rht@tempb{\int_eval:n{-\__rht_ycoord:n{#1}}}
  \bool_if:NTF \l__rht_french_bool
  {
    \edef\rht@tempc{\int_eval:n{-\__rht_ycoord:n{\int_eval:n{#1-1}}}}
  }
  {
    \edef\rht@tempc{\int_eval:n{-\__rht_ycoord:n{\int_eval:n{#1+1}}}}
  }
  \exp_args:NNx \g@addto@macro \rhtcmds { \exp_not:N \rhtvline{\rht@tempa}{\rht@tempb}{\rht@tempc} }
}

\cs_new_protected:Nn \__rht_add_dot_cmd:nn
{
  \tl_set:Nx \l__rht_curdigit_tl { \__rht_get_cell:nn{#1}{#2} }
  \str_if_eq:VnF \l__rht_curdigit_tl { 0 }
  {
    \edef\rht@tempa{\int_eval:n{#2}}
    \edef\rht@tempb{\int_eval:n{-\__rht_ycoord:n{#1}}}
    \exp_args:NNx \g@addto@macro \rhtcmds { \exp_not:N \rhtdot{\rht@tempa}{\rht@tempb} }
  }
}

\cs_new_protected:Nn \__rht_draw:
{
  \edef\rhtscale{\l__rht_scale_tl}
  \rhtdrawtableau
}

\ExplSyntaxOff

\def\rhtcell#1#2{\draw (#1,#2) rectangle +(1,1);}
\def\rhthline#1#2{\draw[line width=1.5pt] (#1-0.5,#2+0.5) -- (#1+0.5,#2+0.5);}
\def\rhtvline#1#2#3{\draw[line width=1.5pt] (#1-0.5,#2+0.5) -- (#1-0.5,#3+0.5);}
\def\rhtdot#1#2{\fill (#1-0.5,#2+0.5) circle (2pt);}

\def\rhtdrawtableau{%
  \begin{tikzpicture}[scale=\rhtscale,baseline=(current bounding box.center)]
    \rhtcmds
  \end{tikzpicture}%
}

\makeatother

\title{The $\elementaryE$-positivity of $\rowMap_k(\elementaryE_\mu)$}

\author{Per Alexandersson}
\address{Department of Mathematics, Stockholm University, SE-106 91 Stockholm, Sweden}
\email{per.w.alexandersson@gmail.com}

\author{Lilan Dai}
\address{Center for Combinatorics, LPMC, 
	Nankai University, Tianjin 300071, P. R. China}
\email{daililan@mail.nankai.edu.cn}

\begin{document}

\begin{abstract}
We prove that the partition division operator $\rowMap_k$ preserves $\elementaryE$-positivity, resolving a conjecture
of Alexandersson and Dai. For $\mu\vdash kn$ and $\nu\vdash n$, we express the coefficient
of $\elementaryE_\nu$ in $\rowMap_k(\elementaryE_\mu)$ as a
normalized sum of positive rational weights over closed walks on a finite directed graph of $k$-restricted partitions. The proof combines the Pieri rule and a root of unity
factorization for Macdonald polynomials.

As a corollary, we recover the $\completeH$-positivity of the symmetric functions $A_{n,k}$ of Amdeberhan, Shareshian, and Stanley.
\medskip 

\noindent\textbf{Keywords:} Symmetric functions, partition division, $\elementaryE$-positivity, Macdonald polynomials
\medskip 
		
\noindent\textbf{AMS Classification 2020:} 05E05; 33D52.
\end{abstract}

\maketitle

\section{Introduction}\label{sec:intro}
Let $\Lambda$ be the ring of symmetric functions over $\QQ$, and let $\Lambda_{\ZZ}$ be the subring with integral monomial coefficients. We use the monomial basis $\{\monomial_\alpha\}$ and the elementary basis $\{\elementaryE_\alpha\}$; our conventions are recalled in Section~\ref{sec:prelim}. Following Alexandersson and Dai~\cite{AlexanderssonDai2026}, we study the linear map $\rowMap_k:\Lambda\to\Lambda$ by
\begin{equation}\label{eq:rowdiv}
 \rowMap_k(\monomial_\alpha)=
 \begin{cases}
  \monomial_{\alpha/k},&k\mid\alpha_i\text{ for every }i,\\
  0,&\text{otherwise}.
 \end{cases}
\end{equation}
Here $\alpha/k$ is obtained by dividing each part by $k$.

For $\mu\vdash kn$ and $\nu\vdash n$, let
$c^{(k)}_{\mu\nu}$ denote the coefficient of $\elementaryE_\nu$
in the elementary expansion of $\rowMap_k(\elementaryE_\mu)$.
These coefficients are integers, since $\rowMap_k$ preserves
$\Lambda_{\ZZ}$ and the $\elementaryE_\nu$ form a $\ZZ$-basis. Alexandersson and Dai conjectured that
$c^{(k)}_{\mu\nu}\ge0$ for all
$\mu\vdash kn$ and $\nu\vdash n$
\cite[Conjecture~1]{AlexanderssonDai2026}.
We prove this conjecture. 

Since each $\elementaryE_\lambda$ is Schur-positive,
$\elementaryE$-positivity implies Schur positivity, while in many naturally
occurring examples the coefficients in the elementary expansion encode
more refined combinatorial or representation-theoretic information.
A prominent source of such questions is the theory of chromatic
symmetric functions initiated by Stanley\cite{Stanley1995}, where the
Stanley--Stembridge conjecture placed $\elementaryE$-positivity at the center of
the subject\cite{StanleyStembridge1993}. Guay-Paquet~\cite{GuayPaquet2013} reduced the conjecture to the case of
unit interval graphs, which was subsequently proved by
Hikita~\cite{Hikita2026}. Closely related positivity
phenomena also arise in the theory of LLT and Macdonald polynomials;
in particular, the $\elementaryE$-positivity and elementary expansions of
vertical-strip LLT polynomials have been studied from several
combinatorial and algebraic viewpoints
\cite{DAdderio2020,AlexanderssonSulzgruber2020x,GarsiaHaglundQiuRomero2019}. For a positive integer $k$, the map $\rowMap_k$ retains, in the
monomial basis, only those terms whose parts are all divisible by $k$
and divides the surviving parts by $k$. Although this definition is
entirely monomial in nature, its action on products of elementary
symmetric functions exhibits a strong positivity phenomenon. In related work, Siegl~\cite[Theorem~4.6]{Siegl2026} proves Schur positivity for certain weighted sums of chromatic symmetric functions. He also proposes an $\elementaryE$-positivity conjecture~\cite[Conjecture~4.8]{Siegl2026} that implies the Alexandersson--Dai conjecture proved here.

The proof is based on a finite weighted graph. Fix $N\ge n$. Its vertices are the $k$-restricted partitions with at most $N$ parts. Starting from a vertex $\lambda$, choose a vertical strip $\eta/\lambda$. The partition $\eta$ has a unique decomposition $ \eta=\tau+k\xi$, where $\tau$ is $k$-restricted. We regard this operation as an edge from $\lambda$ to $\tau$. The conjugate partition $\xi'$ is recorded as the label of the edge, and the edge receives an explicit positive rational weight, a product of local factors indexed by pairs of rows of $\lambda$. A walk is obtained by repeating this operation. The main formula states that $c^{(k)}_{\mu\nu}$ is $k^{-N}$ times the total weight of the closed walks whose successive strip sizes are $\mu_1,\ldots,\mu_{\ell(\mu)}$ and whose combined label is $\nu$.

The organization of this paper is as follows.
In Section~\ref{sec:prelim}, we review the necessary preliminaries
on symmetric functions and Macdonald polynomials.
In Section~\ref{sec:walks}, we construct a finite directed graph
on $k$-restricted partitions, define its edge labels and positive
rational weights, and introduce the closed walks appearing in
our coefficient formula.
In Section~\ref{sec:macdonald}, we derive a root of unity factorization of Macdonald polynomials in a form adapted to $k$-restricted partitions in
Corollary~\ref{cor:factorization}, which explains the graph construction.
In Section~\ref{sec:evaluation}, we use interpolation and a
root of unity filter to obtain the trace identity in
Proposition~\ref{prop:trace}, relating the transition matrices
to $\rowMap_k(\elementaryE_\mu)$. Finally,
in Section~\ref{sec:proof}, we derive the weighted walk formula
in Theorem~\ref{thm:walk} and prove the $\elementaryE$-positivity
of $\rowMap_k(\elementaryE_\mu)$ in Theorem~\ref{thm:positive}.

{\Lilan
A Lean~4 formalization of Theorem~\ref{thm:positive} and of the walk formula in Theorem~\ref{thm:walk} is available at \url{https://github.com/PerAlexandersson/PartitionDivisionLean}. The proof below follows the formalization; in particular, the Pieri coefficients enter only through their form~\eqref{eq:rowpairs} indexed by pairs of rows.}

\section{Preliminaries}\label{sec:prelim}
\subsection{Partitions and vertical strips}
A \defin{partition} is a weakly decreasing sequence
\[
 \lambda=(\lambda_1,\lambda_2,\ldots)
\]
of nonnegative integers with only finitely many nonzero terms. Its size and length are
\[
 |\lambda|=\sum_i\lambda_i,
 \qquad
 \ell(\lambda)=\#\{i:\lambda_i>0\}.
\]
We write $\lambda\vdash n$ when $|\lambda|=n$, and $\varnothing$ for the partition of zero. Trailing zero parts may be added or omitted as convenient. The \defin{conjugate partition} is
\[
 \lambda'_j=\#\{i:\lambda_i\ge j\}.
\]
Addition and scalar multiplication of partitions are componentwise. Thus $(k^h)=k(1^h)$ is the rectangular partition with $h$ rows of length $k$.

Let $\lambda\subseteq\eta$ be partitions of length at most $N$, padded by zeros to length $N$. We say that $\eta/\lambda$ is a \defin{vertical $r$-strip} if
\[
 \eta=\lambda+\delta,\qquad \delta\in\{0,1\}^N,\qquad \sum_{i=1}^N\delta_i=r.
\]
In the Young diagram, $\eta$ is obtained from $\lambda$ by adding $r$ cells, no two of them in the same row. We call $\delta$ the \defin{strip vector}. If $\delta_i=0$ and $\delta_j=1$ for some $i<j$, then $\lambda_i>\lambda_j$. Indeed, if $\lambda_i=\lambda_j$, then rows $i,i+1,\ldots,j$ of $\lambda$ have the same length, and adding a cell to row $j$ but not to row $i$ violates a partition inequality between them.

\subsection{Symmetric functions}
The \defin{monomial symmetric function} $\monomial_\lambda$ is the sum of the distinct monomials whose nonzero exponents are the parts of $\lambda$. For $r\ge1$, 
\[
 \elementaryE_r=\sum_{i_1<\cdots<i_r}x_{i_1}\cdots x_{i_r},
 \qquad \powersum_r=\sum_i x_i^r,
 \qquad \elementaryE_0=1.
\]
For a partition $\lambda$, let
\[
 \elementaryE_\lambda
=\prod_{j=1}^{\ell(\lambda)}\elementaryE_{\lambda_j},
 \qquad
 \powersum_\lambda
=\prod_{j=1}^{\ell(\lambda)}\powersum_{\lambda_j}.
\]
The families $\{\elementaryE_\lambda\}$ and
$\{\powersum_\lambda\}$ are the elementary and power-sum
bases of $\Lambda$, respectively.
A symmetric function is \defin{$\elementaryE$-positive}
if its coefficients in the elementary basis are nonnegative.

For a field $F$, write $\Lambda_N(F)=F[x_1,\ldots,x_N]^{\mathfrak S_N}$. We use the finite alphabet $X_N=(x_1,\ldots,x_N)$ and write $f[X_N]=f(x_1,\ldots,x_N)$. If $Y$ is an alphabet and $z$ is one additional variable, then $f[Y+z]$ denotes evaluation in their union. We only need the plethystic specialization
\[
 f[p_k](X_N)=f(x_1^k,\ldots,x_N^k).
\]
Define
\begin{equation}\label{eq:Ei} \defin{E_i(X_N)}\coloneqq\elementaryE_i[\powersum_k](X_N),\qquad 1\le i\le N.
\end{equation}
The polynomials $E_1(X_N),\ldots,E_N(X_N)$ are algebraically independent. Indeed, with $y_i=x_i^k$, we have
\[
 \QQ[y_1,\ldots,y_N]^{\mathfrak S_N}
=\QQ[\elementaryE_1(y),\ldots,\elementaryE_N(y)],
\]
and the generators $\elementaryE_1(y),\ldots,\elementaryE_N(y)$ are algebraically independent. Since
\[
 E_i(X_N)=\elementaryE_i(y_1,\ldots,y_N),
\]
the same is true of the $E_i$. 
Since $E_1(X_N),\ldots,E_N(X_N)$ are algebraically independent,
the monomials
\[
 E_\nu(X_N)
 =\prod_{i=1}^N E_i(X_N)^{m_i(\nu)},
 \qquad \nu_1\le N,
\]
are linearly independent over $\QQ$.
We use this fact to compare coefficients in
Section~\ref{sec:proof}.

For a polynomial
\[
 f[X_N]=\sum_{\alpha\in\mathbb Z_{\ge0}^N}
 a_\alpha x_1^{\alpha_1}\cdots x_N^{\alpha_N},
\]
define
\[
 \defin{\Phi_k(f[X_N])}\coloneqq
 \sum_{\substack{\alpha\in\mathbb Z_{\ge0}^N\\k\mid\alpha_i\text{ for every }i}}
 a_\alpha x_1^{\alpha_1}\cdots x_N^{\alpha_N}.
\]
Thus $\Phi_k$ keeps exactly those monomials in which every variable exponent is divisible by $k$.

\begin{lemma}\label{lem:projection}
For every symmetric function $f$ and every $N$,
\begin{equation}\label{eq:projection}
 (\rowMap_k f)[\powersum_k](X_N)=\Phi_k(f[X_N]).
\end{equation}
\end{lemma}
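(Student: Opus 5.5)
Both sides of \eqref{eq:projection} are linear in $f$: $\rowMap_k$ is linear by definition, plethystic specialization $g\mapsto g[\powersum_k](X_N)$ is a ring homomorphism, and $\Phi_k$ is linear on polynomials. It therefore suffices to check the identity on the monomial basis, i.e.\ for $f=\monomial_\alpha$ with $\alpha$ a partition.

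First we expand the right-hand side. By definition, $\monomial_\alpha[X_N]$ is the sum of the distinct monomials $x^\beta$ with $\beta\in\ZZ_{\ge0}^N$ a rearrangement of $\alpha$ padded by zeros. This sum is $0$ when $\ell(\alpha)>N$. Since $0$ is divisible by $k$, a monomial $x^\beta$ survives $\Phi_k$ exactly when every nonzero part of $\alpha$ is divisible by $k$. This condition depends only on $\alpha$, not on the particular rearrangement $\beta$. So $\Phi_k(\monomial_\alpha[X_N])=\monomial_\alpha[X_N]$ if $k\mid\alpha_i$ for all $i$, and $0$ otherwise.

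Next we treat the left-hand side. If some $\alpha_i$ is not divisible by $k$, then $\rowMap_k\monomial_\alpha=0$ and both sides vanish. Otherwise write $\alpha=k\gamma$ with $\gamma=\alpha/k$. Then
\[
\monomial_\gamma[\powersum_k](X_N)=\monomial_\gamma(x_1^k,\ldots,x_N^k)=\sum_{\beta}x^{k\beta},
\]
where $\beta$ runs over the distinct rearrangements of $\gamma$ in $\ZZ_{\ge0}^N$.

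The map $\beta\mapsto k\beta$ is a bijection from the distinct rearrangements of $\gamma$ onto the distinct rearrangements of $\alpha$, because multiplication by $k$ is injective on exponent vectors. Hence this sum equals $\monomial_\alpha[X_N]$, which is exactly what the right-hand side computed to in this case. When $\ell(\alpha)=\ell(\gamma)>N$, both sides are $0$.

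No step is a genuine obstacle. The only points needing care are the bookkeeping that zero entries count as divisible by $k$, and the injectivity of $\beta\mapsto k\beta$, which ensures that distinct monomials are not merged or duplicated under the specialization $x_i\mapsto x_i^k$.
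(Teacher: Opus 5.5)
Your proposal is correct and follows essentially the same route as the paper: reduce by linearity to $f=\monomial_\alpha$, split on whether every part of $\alpha$ is divisible by $k$, and identify $\monomial_\gamma(x_1^k,\ldots,x_N^k)=\monomial_{k\gamma}(X_N)$. Your extra remarks about the case $\ell(\alpha)>N$ and the injectivity of $\beta\mapsto k\beta$ spell out details the paper leaves implicit.
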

\begin{proof}
Both sides are linear in $f$, so it suffices to take $f=\monomial_\alpha$. By definition, $\monomial_\alpha[X_N]$ is the sum of the distinct monomials obtained by permuting the exponent vector
\[ (\alpha_1,\ldots,\alpha_{\ell(\alpha)},0,\ldots,0).
\]
Hence all exponents in every monomial are divisible by $k$ if and only if every part of $\alpha$ is divisible by $k$. If some part is not divisible by $k$, then both sides of~\eqref{eq:projection} vanish. If $\alpha=k\beta$, then
\[
 \rowMap_k(\monomial_\alpha)=\monomial_\beta,
\]
and
\[
 \monomial_\beta[\powersum_k](X_N)
 =\monomial_\beta(x_1^k,\ldots,x_N^k)
 =\monomial_{k\beta}(X_N)=\monomial_\alpha(X_N).
\]
Thus the two sides agree on every monomial symmetric function and hence on every symmetric function.
\end{proof}

Notice that if $f=\sum_\alpha a_\alpha \monomial_\alpha$ with $a_\alpha\in\ZZ$, then
\[
 \rowMap_k(f)=\sum_{\alpha=k\beta}a_\alpha \monomial_\beta
\]
still has integral monomial coefficients. Since both the monomial and elementary bases are $\ZZ$-bases of $\Lambda_{\ZZ}$, the coefficients $c^{(k)}_{\mu\nu}$ are integers.

\subsection{Macdonald polynomials, Pieri, and the first difference operator}\label{subsec:macdonald-background}
We recall the standard Macdonald-polynomial facts. Here $q$ and $t$ are generic parameters; Section~\ref{sec:macdonald} studies what happens after specializing $q$ to a primitive $k$th root of unity.

For partitions of the same size, write $\alpha\le\lambda$ in dominance order if
\[
 \sum_{i=1}^j\alpha_i\le\sum_{i=1}^j\lambda_i
\]
for every $j$. The \defin{Macdonald symmetric functions $P_\lambda[X;q,t]$} are characterized by
\[
 P_\lambda=\monomial_\lambda+
 \sum_{\alpha<\lambda}u_{\lambda\alpha}(q,t)\monomial_\alpha
\]
and by pairwise orthogonality with respect to the scalar product
\[
 \langle \powersum_\alpha,\powersum_\gamma\rangle_{q,t}
 =\delta_{\alpha\gamma}z_\alpha
   \prod_j\frac{1-q^{\alpha_j}}{1-t^{\alpha_j}},
 \qquad
 z_\alpha=\prod_{i\ge1}i^{m_i(\alpha)}m_i(\alpha)!,
\]
where $m_i(\alpha)$ denotes the multiplicity of the part $i$ in $\alpha$. {\Lilan
We use the following standard properties from
\cite[Chapter VI]{Macdonald1995}. In $N$ variables, $P_\lambda$ vanishes when
$\ell(\lambda)>N$, and the remaining polynomials form a basis.
Also $P_{(1^r)}=\elementaryE_r$ and $P_\varnothing=1$. A rational function is said to be \defin{regular} at a parameter value
if it admits a representation whose denominator is nonzero there. The two standard inputs used below are the vertical Pieri rule and
the eigenvalue identity for the first Macdonald difference operator.}

The \defin{vertical Pieri rule}~\cite[Chapter~VI, (6.24)]{Macdonald1995} states that
\begin{equation}\label{eq:pieri}
 P_\lambda \elementaryE_r=
 \sum_{\eta/\lambda\text{ vertical }r\text{-strip}}
 \psi'_{\eta/\lambda}(q,t)P_\eta .
\end{equation}
We use the coefficient in the form indexed by pairs of rows~\cite[Eq.~(2.12)]{BorodinCorwin2014}: if $\eta=\lambda+\delta$, then
\begin{equation}\label{eq:rowpairs}
 \psi'_{\eta/\lambda}(q,t)=
 \prod_{\substack{1\le i<j\le N\\\delta_i=0,\ \delta_j=1}}
 \frac{(1-q^{\lambda_i-\lambda_j}t^{j-i-1})
       (1-q^{\lambda_i-\lambda_j-1}t^{j-i+1})}
      {(1-q^{\lambda_i-\lambda_j}t^{j-i})
       (1-q^{\lambda_i-\lambda_j-1}t^{j-i})}.
\end{equation}
It agrees with Macdonald's product over cells, by a telescoping product over the rows below each cell. It depends only on the row differences $\lambda_i-\lambda_j$ and the row distances $j-i$, and this is all we use below.

{\Lilan
For the rectangular specialization we also use the first Macdonald difference operator in $N$ variables.  Write
\[
 (T_{a,x_i}f)(x_1,\ldots,x_N)
   =f(x_1,\ldots,a x_i,\ldots,x_N),
 \qquad
 A_i(X_N;t)=\prod_{j\ne i}\frac{t x_i-x_j}{x_i-x_j}.
\]
Then
\begin{equation}
 D_N^1(q,t)=\sum_{i=1}^N A_i(X_N;t)T_{q,x_i}.
 \label{eq:macdonald-first-operator}
\end{equation}
Although the individual $A_i$ are rational functions of $X_N$, this
operator preserves symmetric polynomials and has matrix entries in $\QQ[q,t]$ in the monomial symmetric basis. Indeed, for
$\Delta(X_N)=\prod_{i<j}(x_i-x_j)$ and a symmetric polynomial $f$,
\[
 \Delta D_N^1(q,t)f
   =\sum_{i=1}^N(T_{t,x_i}\Delta)(T_{q,x_i}f)
\]
is alternating and hence divisible by $\Delta$; the quotient is symmetric. Thus the operator can be specialized at $q=\zeta$.

The standard eigenvalue identity is
\begin{equation}
 D_N^1(q,t)P_\lambda[X_N;q,t]
   =\varepsilon_\lambda(q,t)P_\lambda[X_N;q,t],
 \qquad
 \varepsilon_\lambda(q,t)=\sum_{i=1}^N q^{\lambda_i}t^{N-i},
 \label{eq:macdonald-first-eigenvalue}
\end{equation}
where $\lambda$ is padded by zeros to length $N$; see
\cite[Chapter VI, Sections 3--4]{Macdonald1995}.
In particular, the case $\lambda=\varnothing$ gives
\[
 \sum_{i=1}^N A_i(X_N;t)=1+t+\cdots+t^{N-1}.
\]
For later use, let $<_{\mathrm{lex}}$ denote lexicographic order on
partitions of a fixed size, padded by zeros.  Dominance triangularity
implies lexicographic triangularity, since
$\alpha<\lambda$ in dominance order implies
$\alpha<_{\mathrm{lex}}\lambda$.

The formalization constructs $P_\lambda$ as the unique monic, lexicographically triangular eigenvector of $D_N^1$ with eigenvalue
$\varepsilon_\lambda(q,t)$. This agrees with the standard normalization
used here: dominance-lower terms are lexicographically lower, and the
generic eigenvalues $\varepsilon_\lambda(q,t)$ are pairwise distinct.}

\begin{example}
Take $\lambda=(1)$ and $r=1$. The only possible partitions are $(2)$ and $(1,1)$, so
\[
 P_{(1)}\elementaryE_1
 =\psi'_{(2)/(1)}P_{(2)}
  +\psi'_{(1,1)/(1)}P_{(1,1)}.
\]
For $(2)/(1)$ the strip vector is $\delta=(1,0)$, so no pair of rows contributes and the coefficient is $1$. For $(1,1)/(1)$ the strip vector is $\delta=(0,1)$. The only contributing pair is $(i,j)=(1,2)$, with $\lambda_1-\lambda_2=1$ and $j-i=1$, so
\[
 \psi'_{(1,1)/(1)}(q,t)
 =\frac{(1-q)(1-t^2)}{(1-qt)(1-t)}
 =\frac{(1-q)(1+t)}{1-qt},
\]
and therefore
\[
 P_{(1)}\elementaryE_1
 =P_{(2)}+\frac{(1-q)(1+t)}{1-qt}P_{(1,1)}.
\]
\end{example}

\section{Weighted walks on restricted partitions}\label{sec:walks}
Fix $k\ge2$ and $N\ge1$. All graphs in this section are finite, and multiple edges are distinguished by the object used to define them.

\subsection{Restricted partitions and their decomposition}
\begin{definition}
A partition $\lambda$ is \defin{$k$-restricted} if
\[
 0\le\lambda_i-\lambda_{i+1}<k\qquad(i\ge1).
\]
Let \defin{$\Rset$} be the set of $k$-restricted partitions of length at most $N$.
\end{definition}
The condition includes the last nonzero part. Indeed, if $\ell(\lambda)=s$, then $\lambda_{s+1}=0$, so the inequality at $i=s$ reads $0\le\lambda_s<k$.

Suppose that
\begin{equation}\label{eq:adjacent-differences}
 d_i=\lambda_i-\lambda_{i+1},\qquad 1\le i\le N,
\end{equation}
where $\lambda_{N+1}=0$. If $\lambda$ is $k$-restricted, then each $d_i$ belongs to $\{0,1,\ldots,k-1\}$. Conversely, given arbitrary digits $d_1,\ldots,d_N$ in this set, define
\begin{equation}\label{eq:recover-partition}
 \lambda_i=d_i+d_{i+1}+\cdots+d_N.
\end{equation}
Then $\lambda_i-\lambda_{i+1}=d_i\ge0$, so $\lambda$ is a partition, and it is $k$-restricted because $d_i<k$. Thus
\[
 \lambda\longleftrightarrow(d_1,\ldots,d_N)
\]
is a bijection between $\Rset$ and $\{0,1,\ldots,k-1\}^N$. In particular,
\begin{equation}\label{eq:number-states}
 |\Rset|=k^N.
\end{equation}
We use the partition notation in the statements and the adjacent differences only as a convenient coordinate system in calculations.

\begin{remark}
The restriction is on adjacent differences, not on the parts themselves. For example, when $k=3$, $\lambda=(5,3,1)$ is $3$-restricted because its adjacent differences are $(2,2,1)$, although $\lambda_1=5>3$. 
\end{remark}

\begin{lemma}\label{lem:decomposition}
Every partition $\eta$ of length at most $N$ has a unique expression
\begin{equation}\label{eq:decomposition}
 \eta=\tau+k\xi,\qquad \tau\in\Rset,
\end{equation}
where $\xi$ is a partition of length at most $N$.
\end{lemma}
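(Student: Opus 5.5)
The natural plan is to work in the coordinates of adjacent differences, as the paper does for $\Rset$ in \eqref{eq:adjacent-differences}--\eqref{eq:recover-partition}. Pad $\eta$ to length $N$ with $\eta_{N+1}=0$, and set $D_i=\eta_i-\eta_{i+1}\ge0$ for $1\le i\le N$. Any decomposition $\eta=\tau+k\xi$ with $\tau,\xi$ padded to length $N$ (and $\tau_{N+1}=\xi_{N+1}=0$) gives, on taking adjacent differences,
\[
 D_i=d_i+k e_i,\qquad d_i=\tau_i-\tau_{i+1},\quad e_i=\xi_i-\xi_{i+1}.
\]
Here $d_i\in\{0,\ldots,k-1\}$ because $\tau\in\Rset$, and $e_i\ge0$ because $\xi$ is a partition.

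For existence, I would define $d_i$ and $e_i$ by Euclidean division of $D_i$ by $k$, so that $d_i=D_i\bmod k$ and $e_i=\lfloor D_i/k\rfloor\ge0$. Then I set $\tau_i=d_i+\cdots+d_N$ and $\xi_i=e_i+\cdots+e_N$. By the bijection \eqref{eq:recover-partition}, $\tau\in\Rset$. Since $e_i\ge0$, $\xi$ is weakly decreasing with nonnegative entries, and it has length at most $N$. Summing the relation $D_j=d_j+ke_j$ over $j\ge i$ gives $\eta_i=\tau_i+k\xi_i$ for all $i\le N$, which is \eqref{eq:decomposition}.

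For uniqueness, the difference relation above applies to any decomposition. Because $0\le d_i<k$ and $e_i\ge0$ is an integer, the uniqueness of Euclidean division forces $d_i=D_i\bmod k$ and $e_i=\lfloor D_i/k\rfloor$. The parts $\tau_i$ and $\xi_i$ are then recovered by the telescoping sums, so $\tau$ and $\xi$ are determined by $\eta$.

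The only step needing care is the boundary bookkeeping. One should insist that $\xi$ have length at most $N$, so that $\xi_{N+1}=0$; this ensures that the index $i=N$ is included and that the last part $\tau_N<k$ is forced, matching the remark after the definition of $k$-restricted. Beyond this point the argument is routine.
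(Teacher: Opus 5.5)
Your proof is correct and matches the paper's argument: Euclidean division of the adjacent differences $\eta_i-\eta_{i+1}$ (with $\eta_{N+1}=0$), reconstruction of $\tau$ and $\xi$ by telescoping sums, and uniqueness from uniqueness of Euclidean division applied to the differences of any decomposition. Your boundary remark is harmless, since $\xi_{N+1}=0$ already follows from $\eta_{N+1}=\tau_{N+1}=0$.
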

\begin{proof}
Suppose $\eta_{N+1}=0$. For each $1\le i\le N$, perform Euclidean division on the adjacent difference:
\begin{equation}\label{eq:division-differences}
 \eta_i-\eta_{i+1}=kq_i+r_i,
 \qquad q_i\in\mathbb Z_{\ge0},\quad 0\le r_i<k.
\end{equation}
The quotients are nonnegative because $\eta_i\ge\eta_{i+1}$. Define
\[
 \tau_i=\sum_{j=i}^N r_j,
 \qquad
 \xi_i=\sum_{j=i}^N q_j.
\]
Then
\[
 \tau_i-\tau_{i+1}=r_i\ge0,
 \qquad
 \xi_i-\xi_{i+1}=q_i\ge0,
\]
so both $\tau$ and $\xi$ are partitions. Moreover $0\le\tau_i-\tau_{i+1}=r_i<k$, so $\tau$ is $k$-restricted.

It remains to verify the equality $\eta=\tau+k\xi$. For every $i$,
\begin{align*}
 (\tau_i+k\xi_i)-(\tau_{i+1}+k\xi_{i+1})
 =r_i+kq_i=\eta_i-\eta_{i+1}.
\end{align*}
Both sequences vanish in position $N+1$. Equality of all adjacent differences together with equality of the final zero term therefore implies
\[
 \eta_i=\tau_i+k\xi_i\qquad(1\le i\le N).
\]

For uniqueness, suppose also $\eta=\widetilde\tau+k\widetilde\xi$ with $\widetilde\tau$ $k$-restricted. Taking adjacent differences gives
\[
 \eta_i-\eta_{i+1}
 = (\widetilde\tau_i-\widetilde\tau_{i+1})
   +k(\widetilde\xi_i-\widetilde\xi_{i+1}),
\]
where $0\le\widetilde\tau_i-\widetilde\tau_{i+1}<k$. By uniqueness of Euclidean division, the first term must equal $r_i$ and the second quotient must equal $q_i$. Hence $\widetilde\tau=\tau$ and $\widetilde\xi=\xi$ after summing the adjacent differences, and we are done.
\end{proof}

\begin{remark}
The unique decomposition is obtained by Euclidean division of the adjacent differences
 $\eta_i-\eta_{i+1}$, not by reducing each part $\eta_i$ separately modulo $k$. The latter operation may even destroy the partition inequalities. 

For instance, with $k=3$ and $\eta=(4,2)$, reducing the parts separately gives $(1,2)$, which is not a partition. The correct adjacent differences are $(2,2)$; both quotients are zero and both remainders are $2$, so the decomposition is simply $\tau=(4,2)$ and $\xi=\varnothing$. Thus the ``modulo $k$ reduction'' in this paper always refers to adjacent row differences.    
\end{remark}

\begin{example}\label{ex:decomposition}
Let $k=3$ and $\eta=(7,4,1)$.
The adjacent differences are $(3,3,1)$.
Division by $3$ gives the quotients $q=(1,1,0)$
and the remainders $r=(0,0,1)$.
Thus
\[
 \tau=(r_1+r_2+r_3,r_2+r_3,r_3)=(1,1,1),\qquad
 \xi=(q_1+q_2+q_3,q_2+q_3,q_3)=(2,1,0).
\]
Hence $(7,4,1)=(1,1,1)+3(2,1)$.
\end{example}

\subsection{Edges and labels}
Introduce independent variables $\uvec=(u_1,\ldots,u_N)$, and put $u_\alpha=\prod_ju_{\alpha_j}$ for $\alpha_1\le N$, with $u_\varnothing=1$.

\begin{definition}\label{def:edge}
An \defin{edge} of size $r$ starting at $\lambda\in\Rset$ is specified by a partition $\eta$ of length at most $N$ such that $\eta/\lambda$ is a vertical $r$-strip. Apply Lemma~\ref{lem:decomposition} to write $\eta=\tau+k\xi$. The edge ends at $\tau$ and has \defin{label} $\xi'$, or equivalently the monomial $u_{\xi'}$. Distinct choices of $\eta$ give distinct edges, even if their endpoints and labels agree.
\end{definition}

An edge of the graph is directed from a $k$-restricted partition
$\lambda$ to a $k$-restricted partition $\tau$. It is obtained by
first adding a vertical strip to form an intermediate partition
$\eta$, and then writing
\[
\eta=\tau+k\xi.
\]
Thus the actual edge is $\lambda\to\tau$, while $\eta$ is intermediate
and $\xi'$ determines the label $u_{\xi'}$.

Since $\ell(\xi)\le N$, every part of $\xi'$ is at most $N$, so the monomial $u_{\xi'}$ is defined. There are no edges of size $r>N$, because a vertical strip in at most $N$ rows contains at most one new cell in each row.

The following coordinate description will be used repeatedly.
\begin{lemma}\label{lem:edge-coordinates}
Let $\lambda\in\Rset$ and write $d_i=\lambda_i-\lambda_{i+1}$. A vertical strip $\eta/\lambda$ is uniquely specified by a vector
\[
 \delta=(\delta_1,\ldots,\delta_N)\in\{0,1\}^N,
 \qquad \eta_i=\lambda_i+\delta_i.
\]
Its size is $r=\sum_i\delta_i$. let $\delta_{N+1}=0$ and
\begin{equation}\label{eq:delta}
 g_i=d_i+\delta_i-\delta_{i+1}.
\end{equation}
Although $g_i$ is first defined from the old-state coordinates $d_i$ and the strip vector $\delta$, it is simultaneously the adjacent row difference of the intermediate partition $\eta$:
\[
 g_i=\eta_i-\eta_{i+1}
 = (\lambda_i-\lambda_{i+1})+(\delta_i-\delta_{i+1}).
\]
The first expression is convenient for computing from the old state; the second explains why reducing $g_i$ modulo $k$ is exactly the decomposition of Lemma~\ref{lem:decomposition}.
Then, we have
\begin{enumerate}
\item $\eta$ is a partition if and only if $g_i\ge0$ for all $i$;
\item for a legal strip, $0\le g_i\le k$;
\item if $\eta=\tau+k\xi$ is the decomposition of Lemma~\ref{lem:decomposition}, then
\[
 \tau_i-\tau_{i+1}\equiv g_i\pmod k,
 \qquad
 \xi_i-\xi_{i+1}=\begin{cases}1,&g_i=k,\\0,&g_i<k;\end{cases}
\]
\item the parts of $\xi'$ are exactly the indices $i$ for which $g_i=k$, and hence
\begin{equation}\label{eq:labelcoordinates}
 u_{\xi'}=\prod_{i:g_i=k}u_i;
\end{equation}
\item the sizes satisfy
\begin{equation}\label{eq:size}
 |\tau|-|\lambda|=r-k|\xi|.
\end{equation}
\end{enumerate}
\end{lemma}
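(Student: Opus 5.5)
The plan is to verify the five items directly from the definitions, using only the adjacent-difference coordinates and the proof of Lemma~\ref{lem:decomposition}. First, since $\eta_i=\lambda_i+\delta_i$ with $\delta_{N+1}=0$ and $\lambda_{N+1}=0$, we have
\[
 \eta_i-\eta_{i+1}=d_i+\delta_i-\delta_{i+1}=g_i ,
\]
and $\eta_{N+1}=0$. The vector $\delta$ determines $\eta$ and conversely, so the strip is uniquely specified by $\delta$, and its size is $\sum_i\delta_i$ by definition.

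For item~(1), a sequence of nonnegative integers ending in a zero is a partition exactly when all its adjacent differences are nonnegative, that is, $g_i\ge0$. Containment $\lambda\subseteq\eta$ and the vertical-strip property are automatic from $\delta\in\{0,1\}^N$.

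For item~(2), I bound $g_i$ using $0\le d_i\le k-1$ (since $\lambda\in\Rset$) and $\delta_i-\delta_{i+1}\le 1$. This gives $g_i\le k$, and legality gives $g_i\ge0$.

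For item~(3), I invoke the construction in the proof of Lemma~\ref{lem:decomposition}: $\tau_i-\tau_{i+1}=r_i$ and $\xi_i-\xi_{i+1}=q_i$, where $g_i=kq_i+r_i$ is the Euclidean division. Since $0\le g_i\le k$, we get $q_i=1$, $r_i=0$ when $g_i=k$, and $q_i=0$, $r_i=g_i$ otherwise. This yields both congruence and the stated formula for $\xi_i-\xi_{i+1}$.

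For item~(4), the multiplicity of the part $i$ in $\xi'$ equals $\xi_i-\xi_{i+1}$, by the standard conjugation fact $m_i(\xi')=\xi_i-\xi_{i+1}$. This holds because $\xi'_j=\#\{a:\xi_a\ge j\}$, so the number of $j$ with $\xi'_j=i$ is $\xi_i-\xi_{i+1}$. By item~(3), $\xi'$ then has each index $i$ with $g_i=k$ as a part exactly once, and no other parts. Hence $u_{\xi'}=\prod_{i:g_i=k}u_i$.

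For item~(5), I sum the identity $\eta=\tau+k\xi$ to get $|\tau|=|\eta|-k|\xi|=|\lambda|+r-k|\xi|$.

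None of this is deep. The only step needing care is item~(4), specifically the conjugate-multiplicity identity and keeping straight that the label records $\xi'$ rather than $\xi$. I would state that identity explicitly with its one-line justification.
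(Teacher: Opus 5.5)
Your proposal is correct and follows essentially the same route as the paper: you identify $g_i$ with the adjacent differences of $\eta$, bound them using $0\le d_i\le k-1$, read off the quotient and remainder of the Euclidean division via Lemma~\ref{lem:decomposition}, use $m_i(\xi')=\xi_i-\xi_{i+1}$ for the label, and compare $|\eta|=|\lambda|+r=|\tau|+k|\xi|$ for the sizes. Your one-line justification of the conjugate-multiplicity identity is a small addition; the paper states that identity without proof.
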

\begin{proof}
Because $\eta_i=\lambda_i+\delta_i$,
\[
 \eta_i-\eta_{i+1}
 = (\lambda_i-\lambda_{i+1})+(\delta_i-\delta_{i+1})
 =g_i.
\]
Thus $\eta$ is weakly decreasing exactly when every $g_i$ is nonnegative, proving (1).

Since $\lambda$ is restricted, $0\le d_i\le k-1$. Also $\delta_i-\delta_{i+1}\in\{-1,0,1\}$. Therefore $g_i\le k$. For a legal strip we already know $g_i\ge0$, which proves (2).

Now apply Euclidean division to $g_i$. Because $0\le g_i\le k$, its quotient upon division by $k$ is either $0$ or $1$: it is $1$ exactly when $g_i=k$. Lemma~\ref{lem:decomposition} identifies these quotients with the adjacent differences $\xi_i-\xi_{i+1}$ and the remainders with $\tau_i-\tau_{i+1}$. This gives (3).

More explicitly,
\[
 g_i=(\tau_i-\tau_{i+1})+k(\xi_i-\xi_{i+1}),
\]
with $0\le\tau_i-\tau_{i+1}<k$. Thus $\tau_i-\tau_{i+1}$ is literally the remainder of $g_i$ upon division by $k$, while $\xi_i-\xi_{i+1}$ is the quotient. Since a legal strip gives $0\le g_i\le k$, only two cases can occur:
\[
 g_i<k\quad\Longrightarrow\quad
 \xi_i-\xi_{i+1}=0,\qquad
 \tau_i-\tau_{i+1}=g_i,
\]
and
\[
 g_i=k\quad\Longrightarrow\quad
 \xi_i-\xi_{i+1}=1,\qquad
 \tau_i-\tau_{i+1}=0.
\]
One may therefore think of $g_i=k$ as an adjacent difference that has just filled one complete block of size $k$; that block is factored off, and the positions at which this happens become the parts of the edge label $\xi'$.

For every partition $\xi$, the multiplicity of the part $i$ in $\xi'$ is
\[
 m_i(\xi')=\xi_i-\xi_{i+1}.
\]
Hence (3) says that $i$ occurs as a part of $\xi'$ precisely when $g_i=k$, and it occurs once. This proves (4). In particular, the label of a single edge is squarefree, although the same index may occur in several different steps of a walk.

Finally, $|\eta|=|\lambda|+r$ because the strip has $r$ cells, while $|\eta|=|\tau|+k|\xi|$ by the decomposition. Equating these two expressions and rearranging gives (5).
\end{proof}

\begin{example}\label{ex:edge}
We illustrate explicitly how an edge and its label are obtained.
Take $k=3$, $N=2$, and $\lambda=(3,1)$. With the convention $\lambda_3=0$, we have $d=(2,1)$.

Consider a step of size $1$ obtained by adding one cell to the first
row. Thus
\[
\delta=(1,0),
\qquad
\eta=\lambda+\delta=(4,1).
\]
The new adjacent differences are $g_i=d_i+\delta_i-\delta_{i+1}, \delta_3=0$,
thus $g=(3,1)$.
Since $g_1=k=3$, reducing the adjacent differences modulo $k$ gives
\[
\tau_1-\tau_2=0,
\qquad
\tau_2-\tau_3=1,\qquad
\xi_1-\xi_2=1,
\qquad
\xi_2-\xi_3=0.
\]
Therefore $\tau=(1,1), \xi=(1)$,
and indeed $(4,1)=(1,1)+3(1)$. 
Hence the corresponding edge has initial vertex $\lambda=(3,1)$ and
final vertex $\tau=(1,1)$. Since $\xi'=(1)$,
its label is $u_{\xi'}=u_1$. Thus,
\[
(3,1)\longrightarrow(1,1),
\qquad\text{with label }u_1.
\]
By contrast, for $\lambda=(1,1)$ the choice $\delta=(0,1)$ gives $\eta=(1,2)$, which is not a partition. It does not define an edge.
\end{example}

\subsection{Weights}\label{subsection:weights}
The preceding subsection determines the directed graph and the labels. The weights come from the vertical Pieri rule~\eqref{eq:pieri}. Let $\zeta$ be a primitive $k$th root of unity and write
\[
 q=\zeta,\qquad t=\zeta v.
\]
Fix a contributing pair $(i,j)$ in~\eqref{eq:rowpairs}, and put
\[
 D=\lambda_i-\lambda_j\ge1,\qquad m=j-i\ge1.
\]
Since $q^at^b=\zeta^{a+b}v^b$, the factor of this pair becomes
\begin{equation}\label{eq:weight-motivation}
 \frac{1-\zeta^{D+m-1}v^{m-1}}{1-\zeta^{D+m-1}v^{m}}
 \cdot
 \frac{1-\zeta^{D+m}v^{m+1}}{1-\zeta^{D+m}v^{m}}.
\end{equation}
Both quotients have the form $(1-\zeta^av^b)/(1-\zeta^av^m)$ with $b\in\{m-1,m+1\}$. After cancellation, evaluation at $v=1$ gives $1$ when $k\nmid a$, since then $\zeta^a\ne1$. When $k\mid a$, the identity
\[
 \frac{1-v^{b}}{1-v^m}
 =\frac{1+v+\cdots+v^{b-1}}{1+v+\cdots+v^{m-1}}
\]
shows that the value at $v=1$ is $b/m$. This motivates the following definition.

For integers $a$, $b\ge0$ and $m\ge1$, let
\begin{equation}\label{eq:beta}
 \defin{\beta_k(a;b,m)}\coloneqq
 \begin{cases}
  1,&k\nmid a,\\[2pt]
  b/m,&k\mid a.
 \end{cases}
\end{equation}
For an edge given by the strip $\eta=\lambda+\delta$, let
\begin{equation}\label{eq:weight}
 \defin{w_k(\eta/\lambda)}\coloneqq
 \prod_{\substack{1\le i<j\le N\\\delta_i=0,\ \delta_j=1}}
 \beta_k(D+m-1;\,m-1,m)\,\beta_k(D+m;\,m+1,m),
 \qquad D=\lambda_i-\lambda_j,\ m=j-i.
\end{equation}
An empty product is $1$.

\begin{lemma}\label{lem:weights}
For every edge, $w_k(\eta/\lambda)\in\QQ_{>0}$.
\end{lemma}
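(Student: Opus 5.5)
The plan is to show that every factor in the product~\eqref{eq:weight} is a positive rational number. A finite product of such factors, including the empty product $1$, is then in $\QQ_{>0}$. By definition~\eqref{eq:beta}, each factor $\beta_k(a;b,m)$ is either $1$ or $b/m$ with $m\ge1$. So a factor can fail to be positive only if it equals $b/m$ with $b=0$. I will therefore check the two kinds of factors attached to a contributing pair $(i,j)$, with $\delta_i=0$, $\delta_j=1$, $i<j$, $D=\lambda_i-\lambda_j$ and $m=j-i\ge1$.

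The second factor $\beta_k(D+m;\,m+1,m)$ is either $1$ or $(m+1)/m$. Both are positive, so nothing needs to be done here.

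The first factor $\beta_k(D+m-1;\,m-1,m)$ is either $1$ or $(m-1)/m$. When $m\ge2$ this is positive. The only delicate case, and the main point of the lemma, is $m=1$, where $(m-1)/m=0$. I will show that this value never occurs, i.e.\ that $k\nmid D+m-1=D$ when $m=1$.

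For $m=1$ we have $j=i+1$, so $D=\lambda_i-\lambda_{i+1}=d_i$. Two facts bound $d_i$.
\begin{enumerate}
\item Since $\lambda\in\Rset$ is $k$-restricted, $D\le k-1$.
\item Since $\delta_i=0$, $\delta_{i+1}=1$ and $\eta$ is a partition, the observation in Section~\ref{sec:prelim} on vertical strips gives $\lambda_i>\lambda_{i+1}$, so $D\ge1$. Equivalently, Lemma~\ref{lem:edge-coordinates}(1) gives $g_i=d_i-1\ge0$.
\end{enumerate}
Hence $1\le D\le k-1$, so $k\nmid D$, and the first factor equals $1$ in this case.

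Combining the cases, every factor lies in $\{1\}\cup\{(m\pm1)/m : m\ge2\}\cup\{2\}$, all of which are positive rationals. Therefore $w_k(\eta/\lambda)\in\QQ_{>0}$. The only real obstacle is ruling out the vanishing factor at adjacent rows, and it is handled by combining $k$-restrictedness with the legality of the strip.
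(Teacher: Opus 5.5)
Your proposal is correct and follows the paper's own proof: the only possibly vanishing factor is $\beta_k(D;0,1)$ at adjacent rows $j=i+1$, and legality of the strip together with $k$-restrictedness of $\lambda$ gives $1\le D\le k-1$, so $k\nmid D$. Every factor is therefore $1$, $(m+1)/m$, or (for $m\ge2$) $(m-1)/m$, all positive.
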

\begin{proof}
Each factor in~\eqref{eq:weight} equals $1$, $(m+1)/m$ or $(m-1)/m$, and is therefore nonnegative. It vanishes only if $m=1$ and $k\mid D$. In that case $j=i+1$, and $1\le D=\lambda_i-\lambda_{i+1}\le k-1$ because the strip is legal and $\lambda$ is $k$-restricted. Hence $k\nmid D$, and every factor is positive.
\end{proof}

The proof shows that the weights are nonnegative for every partition $\lambda$, restricted or not; restriction is used only to make them positive.

\begin{example}\label{ex:weight}
Let $k=3$, $\lambda=(3,1,0)$ and $\eta=(3,2,1)$, so $\delta=(0,1,1)$. The contributing pairs are $(1,2)$ and $(1,3)$. For $(1,2)$ we have $D=2$ and $m=1$, which gives the factors $\beta_3(2;0,1)=1$ and $\beta_3(3;2,1)=2$. For $(1,3)$ we have $D=3$ and $m=2$, which gives $\beta_3(4;1,2)\beta_3(5;3,2)=1$. Consequently $w_3(\eta/\lambda)=2$.

The weights need not be integers. For $k=3$, $\lambda=(1,1,0)$ and $\eta=(1,1,1)$ we have $\delta=(0,0,1)$. The pair $(1,3)$ has $D=1$ and $m=2$, which gives $\beta_3(2;1,2)\beta_3(3;3,2)=3/2$, and the pair $(2,3)$ has $D=1$ and $m=1$, which gives the factor $1$. So $w_3(\eta/\lambda)=3/2$.
\end{example}

\subsection{Closed walks}
A \defin{walk} with successive step sizes $r_1,\ldots,r_s$ consists of edges
\[
 \lambda^0\xrightarrow{\eta^1}\lambda^1
 \xrightarrow{\eta^2}\cdots
 \xrightarrow{\eta^s}\lambda^s,
 \qquad \eta^j=\lambda^j+k\xi^j,
\]
where $\eta^j/\lambda^{j-1}$ is a vertical $r_j$-strip. It is \defin{closed} if $\lambda^s=\lambda^0$. The initial vertex is part of the data; walks are not identified under cyclic rotation. Define
\begin{equation}\label{eq:walkdata}
 \defin{\wt(W)}\coloneqq\prod_{j=1}^s w_k(\eta^j/\lambda^{j-1}),
 \qquad
 \defin{u_{\nu(W)}}\coloneqq\prod_{j=1}^s u_{(\xi^j)'}.
\end{equation}
Equivalently, $\nu(W)$ is obtained by collecting the parts of all $(\xi^j)'$ and sorting them in decreasing order.

For example, if the three edge labels are $(3,1)$, $(2)$, and $(3,2)$, then
\[
 (u_3u_1)u_2(u_3u_2)=u_3^2u_2^2u_1,
\]
so the combined label is
\[
 \nu(W)=(3,3,2,2,1).
\]
The operation is simply concatenation of all label parts; it does not alter the numerical edge weights.

Applying \eqref{eq:size} to each step gives
\[
 |\lambda^j|-|\lambda^{j-1}|=r_j-k|\xi^j|.
\]
Summing over $j$ yields
\[
 |\lambda^s|-|\lambda^0|
 =\sum_{j=1}^s r_j-k\sum_{j=1}^s|\xi^j|.
\]
If $W$ is closed, the left-hand side is zero.
Conjugation preserves size, so
\[
 |\nu(W)|
 =\sum_{j=1}^s|(\xi^j)'|
 =\sum_{j=1}^s|\xi^j|.
\]
Therefore
\begin{equation}\label{eq:walkdegree}
 k|\nu(W)|=\sum_{j=1}^s r_j.
\end{equation}
In particular, let
$\mu=(\mu_1,\ldots,\mu_s)\vdash kn$ and set $r_j=\mu_j$.
Then $|\nu(W)|=n$, so $\nu(W)\vdash n$.
 
 Let \defin{$\mathcal W_{k,N}(\mu,\nu)$} be the finite set of these closed walks with label $\nu$. We will prove
\begin{equation}\label{eq:walkformula}
 c^{(k)}_{\mu\nu}=\frac{1}{k^N}
       \sum_{W\in\mathcal W_{k,N}(\mu,\nu)}\wt(W)
       \qquad(N\ge n).
\end{equation}
The graph and all terms on the right have now been defined independently of the coefficients on the left.

\section{Macdonald polynomials at a root of unity}\label{sec:macdonald}
The ordinary Macdonald identities needed below were collected in Section~\ref{subsec:macdonald-background}. We now specialize the first parameter to a primitive $k$th root of unity $\zeta$, while keeping $t$ indeterminate. The purpose of this section is to prove the factorization that turns an intermediate partition $\eta$ into the finite-state endpoint $\tau$ and the label $\xi'$.

\subsection{The Pieri coefficients at the root of unity}
\begin{lemma}\label{lem:rowpairs}
Let $\eta/\lambda$ be a vertical strip with $\ell(\eta)\le N$. Then $\psi'_{\eta/\lambda}(q,t)$ is regular at $q=\zeta$, with $t$ indeterminate, whether or not $\lambda$ is restricted.
\end{lemma}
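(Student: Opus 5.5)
Since \eqref{eq:rowpairs} writes $\psi'_{\eta/\lambda}(q,t)$ as a finite product, it suffices to show that each denominator factor is a nonzero element of $\QQ(\zeta)[t]$ after substituting $q=\zeta$. Then the whole product is a quotient whose denominator does not vanish at $q=\zeta$, which is exactly regularity in the sense of Section~\ref{subsec:macdonald-background}. No cancellation is needed, and the restriction hypothesis on $\lambda$ never enters.

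Fix a contributing pair $i<j$ with $\delta_i=0$ and $\delta_j=1$. Put $D=\lambda_i-\lambda_j$ and $m=j-i\ge1$. The denominator factors are $1-q^{D}t^{m}$ and $1-q^{D-1}t^{m}$. By the observation in the subsection on vertical strips, $\delta_i=0$ and $\delta_j=1$ with $i<j$ force $D\ge1$, so both exponents of $q$ are nonnegative integers and the factors are honest polynomials in $q$ and $t$.

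Specializing $q=\zeta$ gives $1-\zeta^{D}t^{m}$ and $1-\zeta^{D-1}t^{m}$. As polynomials in the indeterminate $t$, each has constant term $1$ and a nonzero coefficient ($-\zeta^{a}$) on $t^{m}$ with $m\ge1$. Hence neither is the zero polynomial, regardless of whether $k$ divides $D$ or $D-1$. The product of these nonzero polynomials over all contributing pairs is therefore a nonzero denominator at $q=\zeta$, which proves regularity.

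The only point needing care is that $D\ge1$, so that $q^{D-1}$ is a monomial rather than $q^{-1}$. This is exactly the vertical-strip observation made earlier, and it holds for any partition $\lambda$. The main conceptual caveat is the phrase ``with $t$ indeterminate'': at $t=1$ a factor such as $1-\zeta^{D}$ could vanish when $k\mid D$. The argument therefore uses $m\ge1$ and the genericity of $t$ essentially.
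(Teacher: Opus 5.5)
Your proposal is correct and follows the paper's proof essentially verbatim: both use $j-i\ge1$ so that each specialized denominator factor has the form $1-\zeta^a t^b$ with $b\ge1$, which is a nonzero polynomial in $t$. Both also invoke the vertical-strip remark $\lambda_i>\lambda_j$ so that the exponents of $q$ are nonnegative.
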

\begin{proof}
Every pair $(i,j)$ in~\eqref{eq:rowpairs} has $j-i\ge1$, and $\lambda_i>\lambda_j$ by the remark following the definition of vertical strips. In each denominator factor the power of $t$ is $j-i\ge1$. After setting $q=\zeta$, each denominator factor is therefore of the form $1-\zeta^at^b$ with $b\ge1$, which is not the zero polynomial in $t$.
\end{proof}

\subsection{Regularity at the root of unity}
\begin{lemma}\label{lem:regular}
The coefficients of $P_\lambda[X;q,t]$ are regular at $q=\zeta$ with $t$ indeterminate. The polynomials $P_\lambda[X_N;\zeta,t]$, $\ell(\lambda)\le N$, form a basis of $\Lambda_N(\QQ(\zeta,t))$.
\end{lemma}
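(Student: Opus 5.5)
We prove regularity by building $P_\lambda$ from $P_\varnothing=1$ with the Pieri rule, and we prove the basis statement from unitriangularity.

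\textbf{Regularity.} Fix $N$ and argue by induction on $\lambda$ in lexicographic order among partitions of fixed size $n$ with $\ell(\lambda)\le N$. Since $P_\lambda$ in $N$ variables determines $P_\lambda$ as a symmetric function once $N\ge n$, it suffices to work in $\Lambda_N$ with $N\ge|\lambda|$. Let $r=\ell(\lambda)$, so that $\lambda=\lambda^-+(1^r)$ with $\lambda^-=\lambda-(1^r)$ of smaller size; by induction on size, $P_{\lambda^-}$ is regular at $q=\zeta$. By~\eqref{eq:pieri},
\[
 P_{\lambda^-}\elementaryE_r=\sum_{\eta}\psi'_{\eta/\lambda^-}(q,t)\,P_\eta .
\]
Adding $r$ cells in distinct rows to $\lambda^-$ produces at most $\lambda^-+(1^r)=\lambda$ in dominance order, and $\lambda$ itself occurs with coefficient $\psi'_{\lambda/\lambda^-}=1$, because $\delta=(1^r,0,\ldots)$ has no pair $\delta_i=0,\delta_j=1$ with $i<j$. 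Hence
\[
 P_\lambda=P_{\lambda^-}\elementaryE_r-\sum_{\eta\ne\lambda}\psi'_{\eta/\lambda^-}P_\eta ,
\]
where every $\eta$ in the sum is dominance-smaller, hence lex-smaller, than $\lambda$ and has the same size. By the inner induction (lex order at fixed size) these $P_\eta$ are regular, and by Lemma~\ref{lem:rowpairs} the coefficients $\psi'_{\eta/\lambda^-}$ are regular at $q=\zeta$. The base case is the lex-minimal partition $(1^n)$, for which $P_{(1^n)}=\elementaryE_n$ is regular. Therefore $P_\lambda$ is regular. The structure is a double induction: outer on size, inner on lexicographic order at fixed size, and the outer hypothesis supplies $P_{\lambda^-}$.

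\textbf{Basis.} After specialization, $P_\lambda[X_N;\zeta,t]=\monomial_\lambda+\sum_{\alpha<\lambda}u_{\lambda\alpha}(\zeta,t)\monomial_\alpha$ remains monic and unitriangular with respect to dominance order, since specializing the regular coefficients preserves both the leading term and the triangular shape. The $\monomial_\alpha$ with $\ell(\alpha)\le N$ form a basis of $\Lambda_N(\QQ(\zeta,t))$, and the terms with $\ell(\alpha)>N$ vanish in $N$ variables. The transition matrix in each degree is therefore unitriangular, hence invertible, so the specialized $P_\lambda$ form a basis.

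\textbf{Main obstacle.} The delicate point is the regularity induction: one must confirm that every $\eta$ in the Pieri expansion other than $\lambda$ is strictly smaller, so that the induction is well-founded, and that the coefficient of $P_\lambda$ is exactly $1$, so that no division at $q=\zeta$ is needed. Both facts follow from~\eqref{eq:rowpairs} as explained above.
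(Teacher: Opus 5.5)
Your proof is correct, but it organizes the regularity argument differently from the paper.

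\textbf{The paper's route.} The paper never identifies a leading term in any Pieri expansion. It lets $S_d$ be the $\mathcal O$-span of the $P_\lambda$ with $\lambda\vdash d$. Regularity of the Pieri coefficients is used only to get $\elementaryE_r S_d\subseteq S_{d+r}$. Hence $\elementaryE_\nu\in S_d$, and, by integrality of the elementary-to-monomial transition, $\monomial_\lambda\in S_d$. Triangularity then comes for free from the defining monomial expansion of $P_\lambda$: the matrix expressing the $\monomial$'s in terms of the $P$'s is $T^{-1}$, which is unitriangular with entries in $\mathcal O$. Its inverse is then computed by a terminating Neumann series.

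\textbf{Your route.} You build a unitriangular recursion directly in the $P$-basis,
$P_\lambda=P_{\lambda^-}\elementaryE_{\ell(\lambda)}-\sum_{\eta<\lambda}\psi'_{\eta/\lambda^-}P_\eta$.
This needs two extra checks, both of which you do correctly: every vertical $\ell(\lambda)$-strip over $\lambda^-$ is dominated by $\lambda^-+(1^{\ell(\lambda)})$, and $\psi'_{\lambda/\lambda^-}=1$ because the row-pair product in~\eqref{eq:rowpairs} is empty.

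\textbf{What each buys.} Your argument is more explicit and constructive. Iterating it is in effect the triangular expansion of $\elementaryE_{\lambda'}$ in the $P$-basis, inverted one step at a time, and it avoids the detour through the monomial basis. The paper's argument needs less information about the Pieri rule, only that its coefficients are regular, and takes unitriangularity from the definition of $P_\lambda$ instead. The basis half of your proof is the same as the paper's.

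\textbf{Minor point.} The separate base case $(1^n)$ is already covered by your general step with $\lambda^-=\varnothing$. The base the outer induction actually needs is $P_\varnothing=1$.
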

{\Lilan
\begin{proof}
Let $\mathcal O\subseteq\QQ(q,t)$ be the ring of rational functions
regular at $q=\zeta$, with $t$ indeterminate.  Lemma~\ref{lem:rowpairs} shows that every vertical Pieri coefficient lies in $\mathcal O$
(choose $N$ large enough for the partitions in that coefficient): each denominator factor specializes to a nonzero polynomial
$1-\zeta^a t^b$, with $a\geq0$ and $b\geq1$.

For $d\geq0$, let
\[
 S_d=\sum_{\lambda\vdash d}\mathcal O\,P_\lambda
 \subseteq\Lambda_d(\QQ(q,t)).
\]
The generic Pieri rule gives $\elementaryE_rS_d\subseteq S_{d+r}$.
Since $P_\varnothing=1$, it follows that $\elementaryE_\nu\in S_d$ whenever
$|\nu|=d$.  The elementary and monomial symmetric functions are $\ZZ$-bases of $\Lambda_{\ZZ}$; hence each $\monomial_\lambda$,
$\lambda\vdash d$, is an integer linear combination of these $\elementaryE_\nu$
and therefore belongs to $S_d$.

Order the $p(d)$ partitions of $d$ by a linear extension of dominance, and let $\mathbf m$ and $\mathbf P$ be the corresponding column vectors.
The preceding inclusion gives
\[
 \mathbf m=A\mathbf P,
 \qquad A\in\operatorname{Mat}_{p(d)}(\mathcal O).
\]
The defining monomial expansions give $\mathbf P=T\mathbf m$ with $T$ unitriangular over $\QQ(q,t)$.  Thus $A=T^{-1}$ is also unitriangular. Write $A=I+B$, with $B$ strictly triangular. Since $B^{p(d)}=0$, the matrix
\[
 A^{-1}=\sum_{j=0}^{p(d)-1}(-B)^j
\]
also has entries in $\mathcal O$. Thus $\mathbf P=A^{-1}\mathbf m$ has all its coefficients in $\mathcal O$. This proves regularity in every degree.

After specialization, the monomial expansion of $P_\lambda$ remains unitriangular, with diagonal coefficient $1$. In $N$ variables the terms indexed by partitions of length greater than $N$ vanish; in each
degree the remaining transition matrix is still unitriangular and is therefore invertible over $\QQ(\zeta,t)$.
\end{proof}

}

\subsection{Rectangular partitions}
\begin{lemma}\label{lem:rectangle}
For $0\le h\le N$,
\begin{equation}\label{eq:rectangle}
 P_{(k^h)}[X_N;\zeta,t]=E_h(X_N),
\end{equation}
where $E_0=1$.
\end{lemma}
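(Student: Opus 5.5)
The plan is to characterize $P_{(k^h)}[X_N;\zeta,t]$ as in the formalization: it is the unique monic, lexicographically triangular eigenvector of the specialized operator $D_N^1(\zeta,t)$ with eigenvalue $\varepsilon_{(k^h)}(\zeta,t)$. We then show that $E_h(X_N)$ has the same three properties.

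\emph{Triangularity.} We have $E_h(X_N)=\elementaryE_h(x_1^k,\ldots,x_N^k)=\monomial_{(k^h)}(X_N)$. This is a single monomial symmetric function, so it is monic and trivially triangular with leading term $\monomial_{(k^h)}$.

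\emph{Eigenvector property.} Since $\zeta^k=1$, we have $T_{\zeta,x_i}x_j^k=x_j^k$ for all $i,j$. Hence $T_{\zeta,x_i}E_h=E_h$ for every $i$, and
\[
 D_N^1(\zeta,t)E_h=\Bigl(\sum_{i=1}^N A_i(X_N;t)\Bigr)E_h=(1+t+\cdots+t^{N-1})E_h
\]
by the $\lambda=\varnothing$ case of \eqref{eq:macdonald-first-eigenvalue}, which is an identity of rational functions independent of $q$. On the other side,
\[
 \varepsilon_{(k^h)}(\zeta,t)=\sum_{i\le h}\zeta^{k}t^{N-i}+\sum_{i>h}t^{N-i}=1+t+\cdots+t^{N-1},
\]
so the eigenvalues agree.

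\emph{Uniqueness.} This is the main obstacle, because at $q=\zeta$ the eigenvalues collide. For example, $\varepsilon_\alpha(\zeta,t)=\varepsilon_{(k^h)}(\zeta,t)$ for every $\alpha$ with all parts divisible by $k$. So the uniqueness characterization cannot be transported to $q=\zeta$ directly. Instead, the argument works at generic $q$ and specializes.

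By Lemma~\ref{lem:regular}, write $P_{(k^h)}[X_N;q,t]=\monomial_{(k^h)}+\sum_{\alpha<(k^h)}u_\alpha(q,t)\monomial_\alpha$ with each $u_\alpha$ regular at $q=\zeta$. The goal is to show $u_\alpha(\zeta,t)=0$.

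The first step is a degree/support argument. Every $\alpha<(k^h)$ in dominance order with $|\alpha|=kh$ and $\ell(\alpha)\le N$ has $\ell(\alpha)>h$. Any such $\alpha$ appearing is lower in dominance, and the $\monomial_\alpha$ with $k\mid$ all $\alpha_i$ and $\alpha<(k^h)$ are exactly where eigenvalue coincidences could occur.

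The cleaner route, which I would try first, avoids a full spectral analysis. Consider the specialized $P:=P_{(k^h)}[X_N;\zeta,t]$ and the difference $F=P-E_h$. Both are eigenvectors with the same eigenvalue, and $F$ is a combination of $\monomial_\alpha$ with $\alpha<(k^h)$. Now apply the specialized operator $D_N^1(\zeta,t)$ in the monomial basis. It is triangular, with diagonal entry on $\monomial_\alpha$ equal to $\varepsilon_\alpha(\zeta,t)$. Take the lex-top term $\alpha$ of $F$. Its coefficient must satisfy $\varepsilon_\alpha(\zeta,t)=1+\cdots+t^{N-1}$. Because $t$ is an indeterminate, comparing powers $t^{N-i}$ forces $\zeta^{\alpha_i}=1$ for all $i$, that is, $k\mid\alpha_i$ for all $i$.

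That case remains. To rule it out, I would argue via $t$-degree or a second operator. Alternatively, specialize $t=\zeta^{-1}$-type values is awkward, so I would instead use a limiting argument in $q$: expand $u_\alpha(q,t)$ near $q=\zeta$. If that fails, the fallback is to compute $P_{(k^h)}$ via the $(q,t)$-symmetry $P_\lambda(q,t)=P_\lambda(q^{-1},t^{-1})$ together with the known formula for $P$ at $q$ a root of unity for rectangles of width $k$. I expect settling these $k$-divisible lower terms to be the crux of the proof.
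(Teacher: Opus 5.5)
Your setup is sound: the eigenvector computation for $E_h=\monomial_{(k^h)}$, the specialization of $P_{(k^h)}$ via Lemma~\ref{lem:regular}, and the lex-top-term argument forcing $\varepsilon_\alpha(\zeta,t)=1+t+\cdots+t^{N-1}$, hence $k\mid\alpha_i$ for all $i$. But the proposal stops exactly at the step you call the crux. You only list candidate strategies: $t$-degree, a second operator, expansion in $q$, or a ``known formula'' for width-$k$ rectangles, which would be circular since that formula is the lemma itself. As written, the proof is incomplete.

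The missing idea is that the remaining case is empty. The support of $F=P_{(k^h)}[X_N;\zeta,t]-E_h$ consists of partitions $\alpha\vdash kh$ strictly below $(k^h)$ in dominance, so $\alpha_1\le k$. If moreover $k\mid\alpha_i$ for all $i$, every part is $0$ or $k$, and $|\alpha|=kh$ forces $\alpha=(k^h)$, a contradiction. Your own observation $\ell(\alpha)>h$ gives the same conclusion, since $|\alpha|\ge k\,\ell(\alpha)>kh$. Hence the lex-top coefficient of $F$ vanishes, and so $F=0$. In particular, your claim that uniqueness ``cannot be transported to $q=\zeta$'' is false for this $\lambda$. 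The eigenvalue collisions involve only partitions with all parts divisible by $k$, and no such partition other than $(k^h)$ lies below $(k^h)$.

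With this one line inserted, your ``cleaner route'' is a complete proof and is essentially the paper's. The paper expands $P_{(k^h)}[X_N;\zeta,t]-E_h$ in the specialized basis $P_\alpha[X_N;\zeta,t]$ with $\alpha<_{\mathrm{lex}}(k^h)$. It then applies $D_N^1(\zeta,t)-(1+t+\cdots+t^{N-1})$ and uses the specialized eigenvalue identity plus linear independence. You instead stay in the monomial basis and use that $D_N^1(\zeta,t)$ is triangular there with diagonal entries $\varepsilon_\alpha(\zeta,t)$. That triangularity is standard (it is how $P_\lambda$ is constructed in \cite[Chapter~VI]{Macdonald1995}), but the paper does not record it. You should cite it, or derive it from the unitriangularity of the specialized $P_\alpha$ together with their eigenvalue equations. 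Both versions hinge on the same combinatorial fact above.
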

{\Lilan
\begin{proof}
The case $h=0$ is immediate. Suppose $1\le h\le N$, and let
\[
 \rho=(k^h),\qquad M=\monomial_\rho[X_N]=\elementaryE_h(x_1^k,\ldots,x_N^k)=E_h(X_N),
 \qquad c_N(t)=\sum_{i=1}^N t^{N-i}.
\]
Since $\zeta^k=1$, every shift $T_{\zeta,x_i}$ fixes $M$. Hence
\[
 D_N^1(\zeta,t)M=c_N(t)M.
\]
By Lemma~\ref{lem:regular}, the coefficients of every
$P_\alpha[X_N;q,t]$ are regular at $q=\zeta$. We may therefore specialize
\eqref{eq:macdonald-first-eigenvalue} to obtain
\[
 D_N^1(\zeta,t)P_\alpha[X_N;\zeta,t]
  =\varepsilon_\alpha(\zeta,t)P_\alpha[X_N;\zeta,t].
\]
In particular, $\varepsilon_\rho(\zeta,t)=c_N(t)$.

The specialized Macdonald polynomials form a unitriangular basis in
each degree. Inverting their triangular expansion in the monomial
basis gives
\[
 P_\rho[X_N;\zeta,t]-M
   =\sum_{\substack{\alpha\vdash kh,\ \ell(\alpha)\le N\\
                   \alpha<_{\mathrm{lex}}\rho}}
       a_\alpha P_\alpha[X_N;\zeta,t],
 \qquad a_\alpha\in\QQ(\zeta,t).
\]
We claim that $\varepsilon_\alpha(\zeta,t)\ne c_N(t)$ for every index in this sum.
Otherwise, because $t$ is indeterminate, comparing coefficients in
\[
 \sum_{i=1}^N\zeta^{\alpha_i}t^{N-i}
   =\sum_{i=1}^N t^{N-i}
\]
would give $\zeta^{\alpha_i}=1$ for every $i$.  Primitivity of $\zeta$
then implies that every $\alpha_i$ is divisible by $k$. But
$\alpha<_{\mathrm{lex}}(k^h)$ implies $\alpha_1\le k$, so every part
of $\alpha$ is either $0$ or $k$. Since $|\alpha|=kh$, this forces
$\alpha=(k^h)$, a contradiction.

Apply $D_N^1(\zeta,t)-c_N(t)\operatorname{id}$ to the displayed expansion of $P_\rho-M$. Its left-hand side vanishes, giving
\[
 \sum_{\substack{\alpha\vdash kh,\ \ell(\alpha)\le N\\
\alpha<_{\mathrm{lex}}\rho}}
 a_\alpha\bigl(\varepsilon_\alpha(\zeta,t)-c_N(t)\bigr)
 P_\alpha[X_N;\zeta,t]=0.
\]
Linear independence and the claim imply $a_\alpha=0$ for every
$\alpha$. Thus $P_\rho[X_N;\zeta,t]=M=E_h(X_N)$.
\end{proof}
}

\subsection{Adding a rectangle}
For $1\le h\le N$, let $\lambda^+=\lambda+k(1^h)$. Thus the first $h$ parts increase by $k$.

\begin{lemma}\label{lem:periodicity}
Let $\delta\in\{0,1\}^N$.
If $\lambda+\delta$ is a partition, then
$\lambda^++\delta$ is also a partition, and
\begin{equation}\label{eq:periodicity}
 \psi'_{\lambda^++\delta\,/\,\lambda^+}(\zeta,t)
 =
 \psi'_{\lambda+\delta\,/\,\lambda}(\zeta,t).
\end{equation}
If $\lambda^++\delta$ is a partition but
$\lambda+\delta$ is not, then $\psi'_{\lambda^++\delta\,/\,\lambda^+}(\zeta,t)=0$.
\end{lemma}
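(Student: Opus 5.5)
Both statements come from comparing the row-pair formula~\eqref{eq:rowpairs} for the two strips. The strip vector $\delta$ is the same in both, so the index set of contributing pairs $\{(i,j): i<j,\ \delta_i=0,\ \delta_j=1\}$ is the same. Only the differences $D=\lambda_i-\lambda_j$ change, and $m=j-i$ is unchanged. Write $\lambda^+_i-\lambda^+_j=D+k\,\epsilon_{ij}$, where $\epsilon_{ij}=1$ if $i\le h<j$ and $\epsilon_{ij}=0$ otherwise. After specializing $q=\zeta$, each factor involves $q$ only through $\zeta^{D}$ or $\zeta^{D-1}$. Since $\zeta^k=1$, replacing $D$ by $D+k$ leaves every factor unchanged as a rational function of $t$.

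\textbf{First claim.} I will check partitionhood through the adjacent differences. Those of $\lambda^++\delta$ equal those of $\lambda+\delta$, except at position $h$, where $k$ is added. So if all of $\lambda_i-\lambda_{i+1}+\delta_i-\delta_{i+1}$ are nonnegative, so are the new ones. There is one caveat: when $\lambda+\delta$ fails to be a partition, some pair may have $\delta_i=0$, $\delta_j=1$ and $\lambda_i=\lambda_j$, so $D=0$, and the original coefficient is not defined by the regular formula. In the first claim this cannot happen, by the remark after the definition of vertical strips. Hence every factor is regular (Lemma~\ref{lem:rowpairs}), and the factorwise equality gives~\eqref{eq:periodicity}.

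\textbf{Second claim.} If $\lambda^++\delta$ is a partition but $\lambda+\delta$ is not, the only possible failure is at position $h$. There $\lambda_h-\lambda_{h+1}+\delta_h-\delta_{h+1}<0$, which forces $\lambda_h=\lambda_{h+1}$, $\delta_h=0$ and $\delta_{h+1}=1$. So the pair $(i,j)=(h,h+1)$ contributes to $\psi'_{\lambda^++\delta/\lambda^+}$, with $D^+=k$ and $m=1$. Its numerator factor $1-q^{D^+}t^{m-1}=1-\zeta^k$ vanishes. The denominators of all factors are nonzero polynomials in $t$, again by Lemma~\ref{lem:rowpairs}, and the other numerators are finite. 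Therefore the specialized coefficient is $0$.

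\textbf{Main obstacle.} The step needing most care is the regularity bookkeeping: the specialization must be taken factor by factor, which is legitimate only because no denominator becomes identically zero in $t$. This is supplied by Lemma~\ref{lem:rowpairs} applied to the legitimate strip over $\lambda^+$, and, in the first case, to the strip over $\lambda$.
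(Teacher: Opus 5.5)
Your proposal is correct and follows essentially the same argument as the paper. The contributing row pairs are unchanged, and row differences shift by $0$ or $k$, so each factor agrees at $q=\zeta$; a newly legal strip forces the pair $(h,h+1)$ with difference $k$, whose numerator factor $1-q^k$ vanishes, and Lemma~\ref{lem:rowpairs} rules out a compensating pole (the paper checks legality of $\lambda^++\delta$ by writing it as $(\lambda+\delta)+k(1^h)$ rather than via adjacent differences, and notes explicitly that $h<N$, which your $\delta_{N+1}=0$ convention gives implicitly).
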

\begin{proof}
We first compare strips that are already legal over $\lambda$. Write
\[
 \lambda_i^+=\begin{cases}\lambda_i+k,&i\le h,\\\lambda_i,&i>h.\end{cases}
\]
If $\eta=\lambda+\delta$ is a partition, then
\[
 \eta^+=\lambda^++\delta=\eta+k(1^h)
\]
is also a partition, because adding the same number $k$ to the first $h$ parts preserves all weak inequalities. Thus every old strip remains legal.

For such a common strip, the conditions $\delta_i=0$ and $\delta_j=1$ indexing~\eqref{eq:rowpairs} are unchanged. For each pair $i<j$,
\[
 \lambda_i^+-\lambda_j^+
 =\begin{cases}
   \lambda_i-\lambda_j,& i,j\le h\text{ or }i,j>h,\\
   \lambda_i-\lambda_j+k,& i\le h<j.
  \end{cases}
\]
Every occurrence of $q^{\lambda_i-\lambda_j}$ or $q^{\lambda_i-\lambda_j-1}$ is therefore multiplied by either $1$ or $q^k$. After setting $q=\zeta$ and using $\zeta^k=1$, every row-pair factor is unchanged. This proves~\eqref{eq:periodicity} for the common strips.

We next determine whether a vector $\delta$ can be illegal over $\lambda$ but legal over $\lambda^+$. The adjacent differences of $\lambda^+$ agree with those of $\lambda$ except at the boundary $h$:
\[
 \lambda_i^+-\lambda_{i+1}^+
 =\begin{cases}
 \lambda_i-\lambda_{i+1},&i\ne h,\\
 \lambda_h-\lambda_{h+1}+k,&i=h.
 \end{cases}
\]
Hence all partition inequalities except possibly the one between rows $h$ and $h+1$ are identical before and after translation. If $\lambda+\delta$ fails to be a partition but $\lambda^++\delta$ is a partition, the failure must occur at this boundary. The bad inequality is
\[
 \lambda_h+\delta_h<\lambda_{h+1}+\delta_{h+1}.
\]
Because $\lambda_h\ge\lambda_{h+1}$ and $\delta_h,\delta_{h+1}\in\{0,1\}$, this can happen only when
\[
 \lambda_h=\lambda_{h+1},
 \qquad \delta_h=0,
 \qquad \delta_{h+1}=1.
\]
This also shows that $h<N$.

For such a newly legal strip, the pair $(h,h+1)$ occurs in the row-pair product because $\delta_h=0$ and $\delta_{h+1}=1$. In $\lambda^+$ the corresponding row difference equals
\[
 \lambda_h^+-\lambda_{h+1}^+=k.
\]
The first numerator factor in~\eqref{eq:rowpairs} is therefore
\[
 1-q^k t^{(h+1)-h-1}=1-q^k.
\]
It vanishes at $q=\zeta$. By the regularity statement in Lemma~\ref{lem:rowpairs}, none of the denominator factors becomes the zero polynomial in $t$, so this zero cannot be cancelled by a pole. The whole Pieri coefficient is therefore zero at $q=\zeta$.
\end{proof}

\begin{example}\label{ex:newstrip}
Take $k=2$, $\lambda=(1,1)$, and $h=1$. The choice $\delta=(0,1)$ is not a strip over $\lambda$, but it is a strip over $\lambda^+=(3,1)$. Its coefficient is
\[
 \psi'_{(3,2)/(3,1)}(q,t)
 =\frac{(1-q^2)(1-qt^2)}{(1-q^2t)(1-qt)},
\]
which is zero at $q=-1$ with $t$ indeterminate.
\end{example}

\begin{proposition}\label{prop:shift}
For every $\lambda$ of length at most $N$ and $1\le h\le N$,
\begin{equation}\label{eq:shift}
 P_{\lambda+k(1^h)}[X_N;\zeta,t]
     =E_h(X_N)P_\lambda[X_N;\zeta,t].
\end{equation}
\end{proposition}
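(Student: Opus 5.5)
Our plan is induction on $|\lambda|$, with $h$ fixed. The base case $\lambda=\varnothing$ is exactly Lemma~\ref{lem:rectangle}. For the inductive step we build $P_\lambda$ out of smaller Macdonald polynomials via the vertical Pieri rule, and then use Lemma~\ref{lem:periodicity} to move the rectangle through the Pieri expansion.

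To set up the induction, let $\lambda\ne\varnothing$ with $\ell(\lambda)\le N$. Let $r$ be the number of rows $i$ with $\lambda_i=\lambda_1$, i.e.\ the rows attaining the maximal part. Let $\kappa=\lambda-(1^r)$, so that $\lambda/\kappa$ is a vertical $r$-strip with strip vector $\delta=(1^r,0^{N-r})$. Specialize the Pieri rule \eqref{eq:pieri} at $q=\zeta$; this is legitimate by Lemmas~\ref{lem:rowpairs} and~\ref{lem:regular}. It gives
\[
P_\kappa \elementaryE_r=\sum_{\delta'}\psi'_{\kappa+\delta'/\kappa}(\zeta,t)\,P_{\kappa+\delta'}.
\]
The index set consists of strips with $|\delta'|=r$ that are legal over $\kappa$, and $\kappa+\delta'$ dominates-below or equals $\lambda$. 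For $\delta'=\delta$ there are no contributing pairs, since all ones precede all zeros, so the coefficient of $P_\lambda$ is $1$. Every other term $\kappa+\delta'$ is strictly dominance-smaller than $\lambda$. This suggests an induction on $|\lambda|$, with an inner induction on dominance order within each size.

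For the inductive step, multiply the identity by $E_h$. By induction on $|\kappa|<|\lambda|$ we have $E_hP_\kappa=P_{\kappa^+}$. By the inner induction, for the dominance-smaller terms $\kappa+\delta'\ne\lambda$ we have $E_hP_{\kappa+\delta'}=P_{(\kappa+\delta')^+}$. Here $(\kappa+\delta')^+=\kappa^++\delta'$. This yields
\[
E_hP_\lambda = P_{\kappa^+}\elementaryE_r-\sum_{\delta'\neq\delta}\psi'_{\kappa+\delta'/\kappa}(\zeta,t)P_{\kappa^++\delta'}.
\]
Now expand $P_{\kappa^+}\elementaryE_r$ by the specialized Pieri rule over strips legal over $\kappa^+$. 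Lemma~\ref{lem:periodicity} says that the strips legal over $\kappa$ contribute the same coefficients, and the newly legal strips contribute zero. The terms with $\delta'\ne\delta$ therefore cancel, and what remains is $\psi'_{\lambda/\kappa}P_{\lambda^+}=P_{\lambda^+}$.

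The main obstacle is organizing the double induction so that the hypothesis really does apply to every $\kappa+\delta'\ne\lambda$. These terms have the same size as $\lambda$ but are strictly lower in dominance. We therefore order pairs by size first and by a linear extension of dominance second. The base of each size class is the minimal element. Within a size class the argument only ever invokes strictly smaller elements, together with $\kappa$ of smaller size, so the induction is well-founded and the minimal element needs no separate treatment. One bookkeeping point remains: the strips in Lemma~\ref{lem:periodicity} must stay within $N$ rows, and they do, since $\ell(\kappa^+)\le N$.
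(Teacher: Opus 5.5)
Your proof is correct. Its core computation is the same as the paper's: the specialized Pieri expansions of $\elementaryE_rP_\kappa$ and $\elementaryE_rP_{\kappa+k(1^h)}$ match term by term by Lemma~\ref{lem:periodicity}, and Lemma~\ref{lem:rectangle} is the base case.

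You differ in how you pass from this local matching to the full identity. The paper defines a linear operator $T_h$ on the specialized basis by $T_h(P_\rho)=P_{\rho+k(1^h)}$. The matching says that $T_h$ commutes with multiplication by every $\elementaryE_r$. Since $\elementaryE_1,\ldots,\elementaryE_N$ generate $\Lambda_N$, $T_h$ must be multiplication by $T_h(1)=E_h$.

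You never introduce $T_h$. Instead you use that $P_\lambda$ is the unique dominance-maximal term, with coefficient $1$, of $\elementaryE_rP_{\lambda-(1^r)}$, where $r$ is the multiplicity of $\lambda_1$. You then run a well-founded induction on size, and within each size on a linear extension of dominance. You verify the needed triangularity correctly: $(1^r,0^{N-r})$ has maximal partial sums among $0/1$-vectors with $r$ ones, so $\kappa+\delta'$ is dominated by $\lambda$, with equality only when $\delta'=\delta$.

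The paper's route is shorter and needs no dominance bookkeeping. However, it uses the basis part of Lemma~\ref{lem:regular} to make $T_h$ well defined. Your route consists only of polynomial identities, so it needs just the regularity statements of Lemmas~\ref{lem:regular} and~\ref{lem:rowpairs}. This saving is local, though, because Lemma~\ref{lem:rectangle}, your base case, itself relies on the basis statement.
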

\begin{proof}
By Lemma~\ref{lem:regular}, the specialized polynomials
\[
 \{P_\rho[X_N;\zeta,t]:\ell(\rho)\le N\}
\]
form a basis of $\Lambda_N(\QQ(\zeta,t))$. We may therefore define a linear operator $T_h$ on this basis by
\[
 \defin{T_h(P_\rho)}\coloneqq P_{\rho+k(1^h)}.
\]
The partition $\rho+k(1^h)$ still has at most $N$ parts, so the definition stays inside the same space.

Fix $1\le r\le N$ and a basis vector $P_\lambda$. The Pieri rule gives
\begin{equation}\label{eq:Th-left}
 \elementaryE_rP_\lambda
 =\sum_{\eta/\lambda\text{ vertical }r\text{-strip}}
 \psi'_{\eta/\lambda}(\zeta,t)P_\eta.
\end{equation}
Applying $T_h$ term by term yields
\[
 T_h(\elementaryE_rP_\lambda)
 =\sum_{\eta/\lambda\text{ vertical }r\text{-strip}}
 \psi'_{\eta/\lambda}(\zeta,t)P_{\eta+k(1^h)}.
\]
On the other hand,
\begin{equation}\label{eq:Th-right}
 \elementaryE_rT_h(P_\lambda)
 =\elementaryE_rP_{\lambda+k(1^h)}
 =\sum_{\theta/(\lambda+k(1^h))\text{ vertical }r\text{-strip}}
 \psi'_{\theta/(\lambda+k(1^h))}(\zeta,t)P_\theta.
\end{equation}
Every old strip $\eta/\lambda$ produces the strip
\[
 \theta=\eta+k(1^h)
\]
over $\lambda+k(1^h)$, and Lemma~\ref{lem:periodicity} says that the two coefficients are equal. Conversely, any strip in~\eqref{eq:Th-right} that does not arise in this way is newly legal after translation, and Lemma~\ref{lem:periodicity} says that its coefficient is zero. Thus the two expansions agree term by term, proving
\begin{equation}\label{eq:commute-er}
 T_h(\elementaryE_rP_\lambda)=\elementaryE_rT_h(P_\lambda)
 \qquad(1\le r\le N).
\end{equation}

Because the $P_\lambda$ form a basis, linearity extends~\eqref{eq:commute-er} from basis vectors to every symmetric polynomial $g$:
\[
 T_h(\elementaryE_rg)=\elementaryE_rT_h(g).
\]
The ring $\Lambda_N$ is the polynomial algebra in $\elementaryE_1,\ldots,\elementaryE_N$. Repeatedly applying the preceding identity shows that for every polynomial $f(\elementaryE_1,\ldots,\elementaryE_N)$ and every $g$,
\[
 T_h(fg)=fT_h(g).
\]
Taking $g=1$ gives $T_h(f)=fT_h(1)$.
By definition,
\[
 T_h(1)=T_h(P_\varnothing)=P_{(k^h)},
\]
and Lemma~\ref{lem:rectangle} identifies this polynomial with $E_h$. Hence $ T_h(f)=fE_h$.
Finally, let $f=P_\lambda$. The left side is $P_{\lambda+k(1^h)}$ by definition of $T_h$, while the right side is $E_hP_\lambda$. This is~\eqref{eq:shift}.
\end{proof}

\begin{corollary}\label{cor:factorization}
For the decomposition $\eta=\tau+k\xi$ of Lemma~\ref{lem:decomposition},
\begin{equation}\label{eq:factorization}
 P_\eta[X_N;\zeta,t]=\elementaryE_{\xi'}[p_k](X_N)P_\tau[X_N;\zeta,t].
\end{equation}
\end{corollary}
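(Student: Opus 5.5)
The plan is to iterate Proposition~\ref{prop:shift}, adding one rectangle $k(1^h)$ at a time. We use the fact that $k\xi$ is a sum of columns: for any partition $\xi$ with $\ell(\xi)\le N$,
\[
 \xi=\sum_{j=1}^{\xi_1}(1^{\xi'_j}),
\qquad\text{so}\qquad
 k\xi=\sum_{j=1}^{\xi_1}k(1^{\xi'_j}),
\]
where each $\xi'_j$ satisfies $1\le\xi'_j\le \ell(\xi)\le N$. Thus $\eta=\tau+k(1^{\xi'_1})+\cdots+k(1^{\xi'_{\xi_1}})$.

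We induct on the number of columns $\xi_1$. Set $\rho^{(0)}=\tau$ and $\rho^{(j)}=\rho^{(j-1)}+k(1^{\xi'_j})$. Each $\rho^{(j)}$ is a partition with at most $N$ parts, since adding a constant to an initial segment of rows preserves weak decrease. Proposition~\ref{prop:shift} applies to every partition of length at most $N$, restricted or not, and with $h=\xi'_j\in[1,N]$ it gives
\[
 P_{\rho^{(j)}}[X_N;\zeta,t]=E_{\xi'_j}(X_N)\,P_{\rho^{(j-1)}}[X_N;\zeta,t].
\]
Chaining these identities yields
\[
 P_\eta[X_N;\zeta,t]=\Bigl(\prod_{j=1}^{\xi_1}E_{\xi'_j}(X_N)\Bigr)P_\tau[X_N;\zeta,t].
\]

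To finish, identify the product. By \eqref{eq:Ei}, $E_i=\elementaryE_i[\powersum_k](X_N)$. Plethystic substitution $f\mapsto f[\powersum_k]$, meaning $x_i\mapsto x_i^k$, is a ring homomorphism, so
\[
 \prod_j E_{\xi'_j}(X_N)=\elementaryE_{\xi'}[\powersum_k](X_N).
\]
When $\xi=\varnothing$, both sides reduce to $P_\tau$.

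There is no real obstacle here. The only points to check are that every intermediate $\rho^{(j)}$ stays within length $N$, so that Proposition~\ref{prop:shift} applies, and that the hypothesis on $\tau$ being restricted is never needed. The corollary holds for any $\tau$, and restriction only matters for the uniqueness in Lemma~\ref{lem:decomposition}.
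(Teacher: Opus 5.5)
Your proposal is correct and follows essentially the paper's proof. The paper writes $\xi=\sum_i q_i(1^i)$ with $q_i=\xi_i-\xi_{i+1}=m_i(\xi')$, which is your column decomposition $\xi=\sum_j(1^{\xi'_j})$ grouped by height; it then applies Proposition~\ref{prop:shift} repeatedly and identifies $\prod_i E_i^{q_i}$ with $\elementaryE_{\xi'}[\powersum_k](X_N)$.
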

\begin{proof}
Let $q_i=\xi_i-\xi_{i+1}, (1\le i\le N)$, with $\xi_{N+1}=0$. These are nonnegative integers. Since
\[
 (1^i)=(\underbrace{1,\ldots,1}_{i\text{ entries}},0,\ldots,0),
\]
summing adjacent differences gives the standard decomposition
\begin{equation}\label{eq:xi-columns}
 \xi=\sum_{i=1}^N q_i(1^i).
\end{equation}
Equivalently, $q_i=m_i(\xi')$, the multiplicity of the part $i$ in the conjugate partition $\xi'$.

Starting from $\tau$, equation~\eqref{eq:xi-columns} gives
\[
 \eta=\tau+k\xi
 =\tau+\sum_{i=1}^N q_i k(1^i).
\]
Apply Proposition~\ref{prop:shift} once for each summand $k(1^i)$, repeating the height $i$ shift exactly $q_i$ times. The order is irrelevant because multiplication by the symmetric polynomials $E_i$ commutes. We obtain
\[
 P_\eta
 =\prod_{i=1}^N E_i^{q_i}P_\tau.
\]
Since $q_i=m_i(\xi')$,
\[
 \prod_i E_i^{q_i}
 =\prod_i \elementaryE_i[p_k]^{m_i(\xi')}
 =e_{\xi'}[p_k].
\]
This proves~\eqref{eq:factorization}.
\end{proof}
For the decomposition in Example~\ref{ex:decomposition}, this reads
\[
 P_{(7,4,1)}[X_N;\zeta,t]=E_2(X_N)E_1(X_N)\elementaryE_3[X_N]
 \qquad(k=3,\ N\ge3).
\]
The factor $e_{\xi'}=e_{21}$ is exactly the label assigned to the corresponding decomposition in Section~\ref{sec:walks}.

\begin{remark}
Descouens and Morita proved a factorization for $Q'_\lambda$
at roots of unity
\cite[Theorem~5.1]{DescouensMorita2008}.
Their normalization is
\[
 Q'_\lambda[X;q,t]
 =
 Q_\lambda\left[\frac{1-q}{1-t}X;q,t\right].
\]
Let $\omega$ be the standard involution defined by
$\omega(\elementaryE_r)=\completeH_r$.
Macdonald duality gives
\[
 P_\lambda[X;q,t]
 =
 \omega\bigl(Q'_{\lambda'}[X;t,q]\bigr).
\]
Applying this identity to their factorization gives Corollary~\ref{cor:factorization}.
We include a direct proof adapted to $\Rset$.
\end{remark}

\section{Interpolation and the root of unity filter}\label{sec:evaluation}
We now relate the weighted graph to multiplication by elementary symmetric polynomials. We first keep an auxiliary parameter and then specialize its matrix entries to the positive weights.

\subsection{Transition matrices and their limit}
Set $t=\zeta v$, where $v$ is an indeterminate. The rows and columns of our matrices are indexed by $\Rset$, with the column denoting the initial vertex. Define
\begin{equation}\label{eq:matrix}
\defin{(M_r(v;\uvec))_{\tau,\lambda}}\coloneqq
 \sum_{\substack{\eta/\lambda\text{ vertical }r\text{-strip}\\
                 \ell(\eta)\le N,\ \eta=\tau+k\xi}}
    \psi'_{\eta/\lambda}(\zeta,\zeta v)u_{\xi'}.
\end{equation}
The decomposition in the sum is always the one from Lemma~\ref{lem:decomposition}. Let $M_r(\uvec)$ be the matrix defined by the same formula with $\psi'_{\eta/\lambda}(\zeta,\zeta v)$ replaced by $w_k(\eta/\lambda)$. Thus $M_r(\uvec)$ is the weighted adjacency matrix for edges of size $r$. For $r>N$, both matrices are zero.

We use column vectors. Thus the column index $\lambda$ is the initial vertex and the row index $\tau$ is the final vertex. Consequently a product
\[
 M_{r_s}\cdots M_{r_1}
\]
represents first taking a step of size $r_1$, then a step of size $r_2$, and so on. This convention is the reason the factors appear in reverse order relative to the chronological order of the walk.

Recall the Pieri expansion
\[
 \elementaryE_r[X_N]P_\lambda[X_N;\zeta,\zeta v]
 =\sum_{\eta/\lambda\text{ vertical }r\text{-strip}}
 \psi'_{\eta/\lambda}(\zeta,\zeta v)
 P_\eta[X_N;\zeta,\zeta v].
\]
For each intermediate partition $\eta$, apply Lemma~\ref{lem:decomposition} to write
\[
 \eta=\tau+k\xi,
 \qquad \tau\in\Rset.
\]
Corollary~\ref{cor:factorization} then gives
\[
 P_\eta[X_N;\zeta,\zeta v]
 =\elementaryE_{\xi'}[p_k](X_N)P_\tau[X_N;\zeta,\zeta v]
 =E_{\xi'}(X_N)P_\tau[X_N;\zeta,\zeta v].
\]
Substituting this into the Pieri expansion and grouping all terms with the same endpoint $\tau$ yields
\begin{align*}\label{eq:multiplication}
e_r[X_N]P_\lambda[X_N;\zeta,\zeta v]
&=
\sum_{\eta/\lambda\text{ vertical }r\text{-strip}}
\psi'_{\eta/\lambda}(\zeta,\zeta v)\,
P_\eta[X_N;\zeta,\zeta v]
\\
&=
\sum_{\eta/\lambda\text{ vertical }r\text{-strip}}
\psi'_{\eta/\lambda}(\zeta,\zeta v)\,
E_{\xi'}(X_N)\,
P_\tau[X_N;\zeta,\zeta v]
\\
&=
\sum_{\tau\in\mathcal R_{k,N}}
\left(
\sum_{\substack{
\eta/\lambda\text{ vertical }r\text{-strip}\\
\eta=\tau+k\xi
}}
\psi'_{\eta/\lambda}(\zeta,\zeta v)\,
E_{\xi'}(X_N)
\right)
P_\tau[X_N;\zeta,\zeta v]
\\
&=
\sum_{\tau\in\mathcal R_{k,N}}
\bigl(M_r(v;E(X_N))\bigr)_{\tau,\lambda}\,
P_\tau[X_N;\zeta,\zeta v].
\end{align*}
That is, 
\begin{equation}\label{eq:multiplication}
 \elementaryE_r[X_N]P_\lambda[X_N;\zeta,\zeta v]
  =\sum_{\tau\in\Rset}
   (M_r(v;E(X_N)))_{\tau,\lambda}
       P_\tau[X_N;\zeta,\zeta v],
\end{equation}
where $E(X_N)=(E_1(X_N),\ldots,E_N(X_N))$. Thus the formal monomial $u_{\xi'}$ in~\eqref{eq:matrix} is not an auxiliary decoration: after the substitution $u_i=E_i(X_N)$, it is exactly the elementary factor produced by the root of unity factorization of the intermediate Macdonald polynomial.

Replacing each Pieri coefficient
$\psi'_{\eta/\lambda}(\zeta,\zeta v)$ by its {\Lilan value at $v=1$ after cancellation}
$w_k(\eta/\lambda)$, define
\[
\defin{(M_r(\mathbf u))_{\tau,\lambda}}
\coloneqq
\sum_{\eta}
w_k(\eta/\lambda)\,u_{\xi'}.
\]
Hence $M_r(\mathbf u)$ is the weighted adjacency matrix of the
directed graph of subsection~\ref{subsection:weights}. The next lemma identifies $M_r(\uvec)$ with the specialization of $M_r(v;\uvec)$ at $v=1$.

{\Lilan
We interpret specialization at $v=1$ algebraically. Let
\[
 \mathcal O_1=\QQ(\zeta)[v]_{(v-1)}
 =\left\{\frac{a(v)}{b(v)}:
 a,b\in\QQ(\zeta)[v],\ b(1)\ne0\right\}.
\]
Evaluation defines a ring homomorphism
\[
 \operatorname{ev}_1:\mathcal O_1\longrightarrow\QQ(\zeta),
 \qquad \frac{a(v)}{b(v)}\longmapsto\frac{a(1)}{b(1)},
\]
which we apply coefficientwise to polynomials and entrywise to matrices.

}
\begin{lemma}\label{lem:limit}
Every coefficient of $M_r(v;\uvec)$ is regular at $v=1$, and
\[
 M_r(1;\uvec)=M_r(\uvec).
\]
In particular, $M_r(\uvec)$ has entries in $\QQ_{\ge0}[u_1,\ldots,u_N]$.
\end{lemma}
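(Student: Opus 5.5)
The plan is to argue entrywise and term by term. Each entry of $M_r(v;\uvec)$ is a finite sum, over strips $\eta/\lambda$ with a prescribed endpoint $\tau$, of $\psi'_{\eta/\lambda}(\zeta,\zeta v)\,u_{\xi'}$. Since $\operatorname{ev}_1$ is a ring homomorphism on $\mathcal O_1$ and the $u_{\xi'}$ are independent of $v$, it suffices to prove two things for a single strip. First, $\psi'_{\eta/\lambda}(\zeta,\zeta v)\in\mathcal O_1$. Second, $\operatorname{ev}_1\psi'_{\eta/\lambda}(\zeta,\zeta v)=w_k(\eta/\lambda)$. Using the row-pair product~\eqref{eq:rowpairs}, and again because $\operatorname{ev}_1$ is multiplicative, this further reduces to a single contributing pair $(i,j)$ with $\delta_i=0$, $\delta_j=1$. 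For such a pair we set $D=\lambda_i-\lambda_j\ge1$ (by the remark on vertical strips) and $m=j-i\ge1$.

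For a fixed pair, I substitute $q=\zeta$, $t=\zeta v$ and regroup the factor as in~\eqref{eq:weight-motivation}:
\[
 \frac{1-\zeta^{D+m-1}v^{m-1}}{1-\zeta^{D+m-1}v^{m}}
 \cdot
 \frac{1-\zeta^{D+m}v^{m+1}}{1-\zeta^{D+m}v^{m}}.
\]
The required check is that $\zeta^{a}t^{b}$ with $t=\zeta v$ gives exactly these exponents. For instance, $q^{D-1}t^{m+1}=\zeta^{D+m}v^{m+1}$ and $q^{D}t^{m-1}=\zeta^{D+m-1}v^{m-1}$. Each of the two quotients then has the form $(1-\zeta^a v^b)/(1-\zeta^a v^m)$ with $m\ge1$ and $b\in\{m-1,m+1\}$, and I handle it in two cases.

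\emph{Case $k\nmid a$.} Here $\zeta^a\ne1$, so the denominator is nonzero at $v=1$ and the quotient lies in $\mathcal O_1$. Its value at $v=1$ is $(1-\zeta^a)/(1-\zeta^a)=1=\beta_k(a;b,m)$.

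\emph{Case $k\mid a$.} Here $\zeta^a=1$, and the quotient equals $(1-v^b)/(1-v^m)=(1+\cdots+v^{b-1})/(1+\cdots+v^{m-1})$ as elements of $\QQ(\zeta)(v)$. This representation has denominator value $m\ne0$ at $v=1$, so the quotient is regular there, with value $b/m=\beta_k(a;b,m)$. This includes $b=0$, where the numerator is the empty sum and the value is $0$.

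Multiplying over both quotients and all pairs gives $\operatorname{ev}_1\psi'_{\eta/\lambda}(\zeta,\zeta v)=w_k(\eta/\lambda)$ as defined in~\eqref{eq:weight}, and summing over strips yields $M_r(1;\uvec)=M_r(\uvec)$.

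The main subtlety is the bookkeeping of exponents when passing from~\eqref{eq:rowpairs} to the regrouped form. This matters because the pairing of numerators with denominators is not the one written in~\eqref{eq:rowpairs}; the denominators are swapped. Since everything is a product in the field $\QQ(\zeta)(v)$, the regrouping is harmless, and regularity follows because each regrouped quotient is regular.

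The final claim follows from Lemma~\ref{lem:weights}: each $w_k(\eta/\lambda)$ is a positive rational, and each $u_{\xi'}$ is a monomial, so the entries of $M_r(\uvec)$ lie in $\QQ_{\ge0}[u_1,\ldots,u_N]$. No condition on $\lambda$ beyond $D\ge1$ is needed for regularity, and the sum in each entry is finite because strips in $N$ rows are finite in number.
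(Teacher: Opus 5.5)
Your proposal is correct and follows essentially the same route as the paper. You reduce entrywise and term by term to a single row-pair factor, rewrite it in the regrouped form \eqref{eq:weight-motivation}, split into the cases $k\nmid a$ and $k\mid a$ to obtain regularity and the value $\beta_k(a;b,m)$, and invoke Lemma~\ref{lem:weights} for nonnegativity; your explicit exponent check, swapped-denominator remark and $b=0$ case are details the paper leaves implicit.
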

\begin{proof}
Fix an edge $\eta=\lambda+\delta$. By~\eqref{eq:rowpairs} and~\eqref{eq:weight-motivation}, $\psi'_{\eta/\lambda}(\zeta,\zeta v)$ is a finite product of quotients
\[
 \frac{1-\zeta^av^b}{1-\zeta^av^m},\qquad b\in\{m-1,m+1\},\quad m\ge1.
\]
If $k\nmid a$, then $\zeta^a\ne1$, so the numerator and denominator are both nonzero at $v=1$, and the quotient belongs to $\mathcal O_1$ with value $1$. If $k\mid a$, then
\[
 \frac{1-v^{b}}{1-v^m}
 =\frac{1+v+\cdots+v^{b-1}}{1+v+\cdots+v^{m-1}},
\]
whose denominator has value $m\ne0$ at $v=1$; hence the quotient belongs to $\mathcal O_1$, with value $b/m$. Thus $\psi'_{\eta/\lambda}(\zeta,\zeta v)$ is regular at $v=1$, and comparing with~\eqref{eq:beta} and~\eqref{eq:weight},
\[
 \operatorname{ev}_1\bigl(\psi'_{\eta/\lambda}(\zeta,\zeta v)\bigr)=w_k(\eta/\lambda).
\]
Each matrix entry is a finite sum of such coefficients times fixed monomials in the $u_i$. Hence every matrix entry is regular at $v=1$, and evaluating term by term gives $M_r(1;\uvec)=M_r(\uvec)$. By Lemma~\ref{lem:weights}, all coefficients of $M_r(\uvec)$ are nonnegative rational numbers.
\end{proof}

\begin{remark}\label{rem:order}
The order of specialization is important: first $q=\zeta$ with $t$ indeterminate, then $t=\zeta v$ and $v=1$. In the row-pair form~\eqref{eq:rowpairs} every denominator carries a positive power of $t$, so no factor degenerates in the first step.
{\Lilan
We evaluate the transition coefficients at $v=1$; no specialization
of the individual polynomials $P_\lambda[X_N;\zeta,\zeta v]$ at
$v=1$ is required.

}
\end{remark}

\subsection{The evaluation matrix}
Take $x_1,\ldots,x_N$ to be algebraically independent over $\QQ(\zeta,v)$, and put
\[
 L=\QQ(\zeta,v,x_1,\ldots,x_N),\qquad
 \mathcal I=\{0,\ldots,k-1\}^N.
\]
For $a\in\mathcal I$, define
\begin{equation}\label{eq:points}
 \defin{X^{(a)}}\coloneqq(\zeta^{a_1}x_1,\ldots,\zeta^{a_N}x_N).
\end{equation}
There are $k^N$ such points, and
\begin{equation}\label{eq:sameE}
 E_i(X^{(a)})=E_i(X_N),\qquad(1\le i\le N).
\end{equation}
Consider the square matrix
\begin{equation}\label{eq:U}
 U_{a,\lambda}=P_\lambda[X^{(a)};\zeta,\zeta v],
 \qquad a\in\mathcal I,\quad\lambda\in\Rset.
\end{equation}

\begin{lemma}\label{lem:invertible}
The matrix $U$ is invertible over $L$.
\end{lemma}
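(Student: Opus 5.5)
The plan is to show $\det U\ne0$ by reducing to a triangular structure with respect to a suitable order on monomials. Each $P_\lambda[X_N;\zeta,\zeta v]$ is, by Lemma~\ref{lem:regular}, a unitriangular combination $\monomial_\lambda+\sum_{\alpha<\lambda}u_{\lambda\alpha}\monomial_\alpha$ with coefficients in $\QQ(\zeta,v)$; here $\alpha$ ranges over partitions of $|\lambda|$. Evaluating at $X^{(a)}$ multiplies each monomial $x^\gamma$ by the character value $\zeta^{a\cdot\gamma}$. So I would write
\[
 U_{a,\lambda}=\sum_{\gamma}c_{\lambda\gamma}\,\zeta^{a\cdot\gamma}x^\gamma ,
\]
and group monomials by the residue class $\bar\gamma=\gamma\bmod k\in(\ZZ/k)^N$. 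This gives
\[
 U_{a,\lambda}=\sum_{\rho\in(\ZZ/k)^N}\zeta^{a\cdot\rho}F_{\lambda,\rho}(x),
\]
that is, $U=\Xi\,F$, where $\Xi_{a,\rho}=\zeta^{a\cdot\rho}$ is the character table of $(\ZZ/k)^N$ (a Kronecker power of the $k\times k$ Vandermonde matrix in $\zeta$, hence invertible). It then suffices to show that the $k^N\times k^N$ matrix $F_{\rho,\lambda}$ is invertible over $L$. Its entry is the sum of the terms of $P_\lambda$ whose exponent vector is congruent to $\rho$ modulo $k$.

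To invert $F$, I would use the bijection between $\Rset$ and residue classes. For $\lambda\in\Rset$, the leading monomial $x^\lambda=x_1^{\lambda_1}\cdots x_N^{\lambda_N}$ has residue $\bar\lambda$. The map $\lambda\mapsto\bar\lambda$ is a bijection $\Rset\to(\ZZ/k)^N$, because residues determine the adjacent differences $d_i\bmod k$, and $d_i\in\{0,\ldots,k-1\}$ determines $\lambda$ via~\eqref{eq:recover-partition}. I would then show $\det F\ne0$ by a leading-term argument. Choose a term order on monomials, for instance degree first and then lexicographic. Then show that $\det F$, expanded as a polynomial in $x$, has a unique maximal monomial $\prod_\lambda x^\lambda$, coming from the diagonal pairing $\rho=\bar\lambda$, with coefficient $\pm1$.

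Every monomial of $P_\lambda$ has degree $|\lambda|$ and lies lexicographically at or below $x^\lambda$: dominance-lower partitions are lex-lower, and permuted exponent vectors of $\alpha\le\lambda$ are lex at most $\lambda$. So in column $\lambda$, the lex-largest monomial overall is $x^\lambda$, in row $\bar\lambda$. For a general permutation pairing $\lambda\mapsto\sigma(\lambda)$, the product of the entries is a product of sums of monomials of the fixed total degree $\sum|\lambda|$. Its lex-leading monomial is the product of the column leading monomials, which is at most $\prod x^\lambda$. Equality holds only when each chosen entry contains $x^\lambda$ itself, which forces $\sigma(\lambda)=\bar\lambda$. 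Hence the monomial $\prod_\lambda x^\lambda$ appears in $\det F$ with coefficient $\pm1$, so $\det F\ne0$ as a polynomial in the algebraically independent $x_i$.

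The main obstacle is the term-order bookkeeping. The product of leading terms is leading only under a monomial order compatible with multiplication, so I would use lex on exponent vectors, which is such an order. The subtle point is that within row $\rho$ the entries of a given column are sums of several monomials, so one must confirm that the leading monomial of the product across a permutation is exactly the product of the per-entry leading monomials. This holds because the order is multiplicative and each $F_{\sigma(\lambda),\lambda}$ is a nonzero-or-zero polynomial with leading term at most $x^\lambda$, with equality only on the diagonal.
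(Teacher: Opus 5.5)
Your proof is correct, and it takes a genuinely different route from the paper.

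\textbf{The paper's route.} The paper never looks at monomials. It evaluates the multiplication identity \eqref{eq:multiplication} for $r=1$ at the points $X^{(a)}$ and obtains $DU=UA$, where $A=M_1(v;E(X_N))$ and $D=\operatorname{diag}(s_a)$ with $s_a=\elementaryE_1[X^{(a)}]$. It then uses $P_\varnothing=1$ to show that $UC$ is the Vandermonde matrix $(s_a^j)$, where $C_{\lambda,j}=(A^j)_{\lambda,\varnothing}$. The $s_a$ are pairwise distinct, so this Vandermonde matrix is invertible, and hence so is $U$.

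\textbf{Your route.} You split off the character table of $(\ZZ/k)^N$ via $U=\Xi F$. You then prove $\det F\ne0$ by a lex leading-term argument, and the key combinatorial input is that $\lambda\mapsto\lambda\bmod k$ is a bijection $\Rset\to(\ZZ/k)^N$.

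\textbf{What each approach buys.}
\begin{itemize}
\item Your argument uses only the unitriangularity of the specialized $P_\lambda$ from Lemma~\ref{lem:regular}. It does not need the Pieri rule or Corollary~\ref{cor:factorization}.
\item Because of this, your argument applies verbatim to any family of polynomials indexed by $\Rset$ whose lex-leading monomial is $x^\lambda$ with coefficient $1$, for example the $\monomial_\lambda$ or the Schur polynomials.
\item Your argument also explains conceptually why $\Rset$ is the right index set: it is a transversal of the exponent residues modulo $k$.
\item The paper's argument avoids all monomial-order bookkeeping and reuses exactly the identity that drives the trace computation in Section~\ref{sec:evaluation}.
\item As a by-product, the paper's argument shows that $U$ diagonalizes $A$ with distinct eigenvalues $s_a$. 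Since $C$ is then invertible too, the vertex $\varnothing$ is a cyclic vector for $A$.
\end{itemize}

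\textbf{A small point of wording.} In your third paragraph, the leading monomial of $\prod_\lambda F_{\sigma(\lambda),\lambda}$ is the product of the leading monomials of the \emph{chosen entries}, each of which is at most $x^\lambda$. It is not the product of the column leading monomials. Your final paragraph states this correctly, and that version is what the argument needs. It also needs the fact that $x^\lambda$ lies only in the residue class $\bar\lambda$, which you use.
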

{\Lilan
\begin{proof}
Let $m=k^N$, $A=M_1(v;E(X_N))$, and
\[
s_a=\elementaryE_1[X^{(a)}]=\sum_{i=1}^N\zeta^{a_i}x_i,
 \qquad D=\operatorname{diag}(s_a)_{a\in\mathcal I}.
\]
The elements $s_a$ are pairwise distinct: if $a\ne b$, then
$\zeta^{a_i}-\zeta^{b_i}\ne0$ for some $i$, and algebraic
independence of the $x_i$ implies $s_a-s_b\ne0$.

For each $a\in\mathcal I$ and $\lambda\in\Rset$, evaluating
\eqref{eq:multiplication} with $r=1$ at $X^{(a)}$ gives
\[
 e_1[X^{(a)}]P_\lambda[X^{(a)};\zeta,\zeta v]
 =
 \sum_{\tau\in\Rset}
 \bigl(M_1(v;E(X^{(a)}))\bigr)_{\tau,\lambda}
 P_\tau[X^{(a)};\zeta,\zeta v].
\]
By \eqref{eq:sameE}, we have
$E_i(X^{(a)})=E_i(X_N)$ for every $i$. Consequently, $M_1(v;E(X^{(a)}))
 =
 M_1(v;E(X_N))
 =
 A$, so the same matrix $A$ occurs at every evaluation point.
Using the definitions of $s_a$ and $U$, the evaluated
multiplication identity therefore becomes $s_a U_{a,\lambda}
=
\sum_{\tau\in\Rset}
A_{\tau,\lambda}U_{a,\tau}$.

We now identify both sides as matrix entries.
Since $D=\operatorname{diag}(s_a)_{a\in\mathcal I}$,
\[
 (DU)_{a,\lambda}
 =
 \sum_{b\in\mathcal I}D_{a,b}U_{b,\lambda}
 =
 s_aU_{a,\lambda},\qquad
 (UA)_{a,\lambda}
 =
 \sum_{\tau\in\Rset}
 U_{a,\tau}A_{\tau,\lambda}.
\]
The evaluated multiplication identity shows that these entries
are equal for every $a\in\mathcal I$ and $\lambda\in\Rset$.
Hence $DU=UA$. 
Consequently $UA^j=D^jU$ for every $j\ge0$. Define the $m\times m$
matrix $C$, with rows indexed by $\Rset$ and columns
indexed by $0,\ldots,m-1$, by
\[
 C_{\lambda,j}=(A^j)_{\lambda,\varnothing}.
\]
Since $U_{a,\varnothing}=P_\varnothing[X^{(a)};\zeta,\zeta v]=1$,
we obtain
\[
\begin{aligned}
(UC)_{a,j}
=\sum_{\lambda\in\Rset}
U_{a,\lambda}(A^j)_{\lambda,\varnothing}
=(UA^j)_{a,\varnothing}
=(D^jU)_{a,\varnothing}
 =s_a^jU_{a,\varnothing}
 =s_a^j,
\end{aligned}
\]
where we used that $D$ is diagonal and
$U_{a,\varnothing}=P_\varnothing[X^{(a)};\zeta,\zeta v]=1$.
Thus $UC$ is a Vandermonde matrix at the pairwise distinct elements
$s_a$, and is therefore invertible over $L$. Both $U$ and $C$ are
square, since $|\mathcal I|=|\Rset|=m$. Hence $U$ is invertible.
\end{proof}

}

\begin{example}\label{ex:onevariable}
For $N=1$, the restricted partitions are $(d)$, $0\le d<k$, and $P_{(d)}(x)=x^d$. Hence $U_{a,d}=\zeta^{ad}x^d$, and the primitive second root of unity is $\zeta=-1$. For $k=2$ this is
\[
 U=\begin{pmatrix}1&x\\1&-x\end{pmatrix}.
\]
The two vertices are $\varnothing$ and $(1)$, and
\[
M_1(u_1)=\begin{pmatrix}0&u_1\\1&0\end{pmatrix}.
\]
After $u_1=x^2$, its eigenvalues are $x$ and $-x$, the values of $e_1$ at the two evaluation points. The next argument is the same calculation in arbitrary dimension.
\end{example}

\subsection{The root of unity filter}
Let $D_r$ be the diagonal matrix indexed by $\mathcal I$ with
\[
 (D_r)_{a,a}=\elementaryE_r[X^{(a)}].
\]
We derive the matrix identity carefully. Fix $a\in\mathcal I$ and $\lambda\in\Rset$. Evaluating~\eqref{eq:multiplication} at $X^{(a)}$ gives
\[
\elementaryE_r[X^{(a)}]P_\lambda[X^{(a)}]
=
\sum_{\tau\in\Rset}
(M_r(v;E(X_N)))_{\tau,\lambda}
P_\tau[X^{(a)}].
\]
Notice that the $(a,\lambda)$ entry of $D_rU$ is
\[
(D_rU)_{a,\lambda}=\sum\limits_{b} (D_r)_{a,b}U_{b,\lambda}=(D_r)_{a,a}U_{a,\lambda}=\elementaryE_r[X^{(a)}]P_\lambda[X^{(a)}],
\]
and the $(a,\lambda)$ entry of $UM_r(v;E(X_N))$ is
\[
(UM_r)_{a,\lambda}=\sum\limits_{\tau} U_{a,\tau}(M_r)_{\tau,\lambda}=\sum_{\tau} P_\tau[X^{(a)}] (M_r)_{\tau,\lambda}.
\]
Hence, simultaneously for all $a$ and $\lambda$,
\begin{equation}\label{eq:DU}
 D_rU=UM_r(v;E(X_N)).
\end{equation}
By Lemma~\ref{lem:invertible}, $U$ is invertible, so multiplying~\eqref{eq:DU} by $U^{-1}$ on the left gives
\begin{equation}\label{eq:similarity}
 M_r(v;E(X_N))=U^{-1}D_rU.
\end{equation}

Now let $\mu=(\mu_1,\ldots,\mu_s)$. Using~\eqref{eq:similarity} for every factor, we have
\begin{align*}
 &M_{\mu_s}(v;E(X_N))\cdots M_{\mu_1}(v;E(X_N))\\
 &\qquad=U^{-1}D_{\mu_s}UU^{-1}D_{\mu_{s-1}}U\cdots U^{-1}D_{\mu_1}U\\
 &\qquad=U^{-1}(D_{\mu_s}\cdots D_{\mu_1})U.
\end{align*}
Taking the ordinary matrix trace and using invariance of trace under similarity gives
\begin{align}
 \tr\bigl(M_{\mu_s}(v;E(X_N))\cdots M_{\mu_1}(v;E(X_N))\bigr)
  =\tr(D_{\mu_s}\cdots D_{\mu_1})=\sum_{a\in\mathcal I}\prod_{j=1}^s \elementaryE_{\mu_j}[X^{(a)}]=\sum_{a\in\mathcal I}\elementaryE_\mu[X^{(a)}].
 \label{eq:traceevaluation}
\end{align}

It remains to identify the last finite sum. Consider first a monomial
\[
 x^\alpha=x_1^{\alpha_1}\cdots x_N^{\alpha_N}.
\]
At the point $X^{(a)}$ it takes the value
\[
 (X^{(a)})^\alpha
 =\zeta^{a_1\alpha_1+\cdots+a_N\alpha_N}x^\alpha.
\]
Summing independently over $a_1,\ldots,a_N$ gives
\begin{align*}
 \sum_{a\in\mathcal I}(X^{(a)})^\alpha
 &=x^\alpha\prod_{i=1}^N
 \left(\sum_{a_i=0}^{k-1}(\zeta^{\alpha_i})^{a_i}\right).
\end{align*}
For a fixed $i$, the inner sum is a finite geometric series. If $k\mid\alpha_i$, then $\zeta^{\alpha_i}=1$ and the sum equals $k$. If $k\nmid\alpha_i$, then $\zeta^{\alpha_i}\ne1$ and
\[
 \sum_{a_i=0}^{k-1}(\zeta^{\alpha_i})^{a_i}
 =\frac{1-(\zeta^{\alpha_i})^k}{1-\zeta^{\alpha_i}}=0.
\]
Therefore
\begin{equation}\label{eq:monomial-filter}
 \sum_{a\in\mathcal I}(X^{(a)})^\alpha
 =\begin{cases}
    k^Nx^\alpha,&k\mid\alpha_i\text{ for every }i,\\
    0,&\text{otherwise}.
   \end{cases}
\end{equation}
By linearity, the finite sum over $a$ is exactly $k^N$ times the projection $\Phi_k$. Applying Lemma~\ref{lem:projection} to $e_\mu$ yields
\begin{equation}\label{eq:filter}
 \sum_{a\in\mathcal I}\elementaryE_\mu[X^{(a)}]
 =k^N(\rowMap_k \elementaryE_\mu)[\powersum_k](X_N).
\end{equation}
This explains the factor $k^N$ in the walk formula.

\begin{proposition}\label{prop:trace}
For every partition $\mu$, the positive transition matrices satisfy
\begin{equation}\label{eq:traceidentity}
 \tr\bigl(M_{\mu_s}(E(X_N))\cdots M_{\mu_1}(E(X_N))\bigr)
     =k^N(\rowMap_k \elementaryE_\mu)[\powersum_k](X_N).
\end{equation}
\end{proposition}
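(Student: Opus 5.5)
The plan is to pass from the $v$-dependent identity \eqref{eq:traceevaluation}, combined with \eqref{eq:filter}, to the specialization $v=1$ using Lemma~\ref{lem:limit}. Combining those two displays gives
\[
 \tr\bigl(M_{\mu_s}(v;E(X_N))\cdots M_{\mu_1}(v;E(X_N))\bigr)=k^N(\rowMap_k\elementaryE_\mu)[\powersum_k](X_N).
\]
This holds in $L=\QQ(\zeta,v,x_1,\ldots,x_N)$. The right-hand side does not involve $v$ at all. It is a polynomial in $x_1,\ldots,x_N$ with rational coefficients.

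The next step is to rewrite the left-hand side at the level of formal variables $\uvec$. I will work with the polynomial
\[
 F(v;\uvec)=\tr\bigl(M_{\mu_s}(v;\uvec)\cdots M_{\mu_1}(v;\uvec)\bigr)\in\mathcal O_1[u_1,\ldots,u_N].
\]
By Lemma~\ref{lem:limit}, every entry of every $M_r(v;\uvec)$ lies in $\mathcal O_1[\uvec]$. Since $\mathcal O_1$ is a ring, products and traces of such matrices stay in $\mathcal O_1[\uvec]$.

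The substitution $u_i\mapsto E_i(X_N)$ is a ring homomorphism $\mathcal O_1[\uvec]\to\mathcal O_1[x_1,\ldots,x_N]$. It commutes with taking matrix products and traces. So the identity above says
\[
 F(v;E(X_N))=k^N(\rowMap_k\elementaryE_\mu)[\powersum_k](X_N).
\]

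Both sides of this equation lie in $\mathcal O_1[x_1,\ldots,x_N]$. Equality in $L$ therefore implies equality coefficientwise in the monomials $x^\alpha$, because $\mathcal O_1[x]\subset L$ injectively.

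Now apply $\operatorname{ev}_1$ coefficientwise. This is again a ring homomorphism, and it commutes with the substitution $u_i\mapsto E_i(X_N)$, since the $E_i$ have rational coefficients. The right-hand side is fixed by $\operatorname{ev}_1$. On the left, Lemma~\ref{lem:limit} gives $\operatorname{ev}_1 M_r(v;\uvec)=M_r(\uvec)$. Hence $\operatorname{ev}_1F(v;\uvec)=\tr(M_{\mu_s}(\uvec)\cdots M_{\mu_1}(\uvec))$. Substituting $\uvec=E(X_N)$ then yields \eqref{eq:traceidentity}.

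The main obstacle is justifying the specialization step, namely evaluating at $v=1$ an identity that was only proved over the field $L$. The factor $U$ itself involves $P_\lambda[X^{(a)};\zeta,\zeta v]$, which need not be regular at $v=1$ (compare Remark~\ref{rem:order}). The argument avoids this by evaluating only the trace expression, in which $U$ has already cancelled. The two facts that make this work are the following. First, the trace lies in the subring $\mathcal O_1[x]$, which holds because the matrices involved lie in $\mathcal O_1[\uvec]$ and $E_i\in\QQ[x]$. Second, $\operatorname{ev}_1$ is a ring homomorphism on this subring. With these two facts in hand, the rest is bookkeeping.
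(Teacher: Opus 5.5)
Your proposal is correct and follows the paper's proof. You combine \eqref{eq:traceevaluation} and \eqref{eq:filter} into the $v$-dependent trace identity. You then use Lemma~\ref{lem:limit} to place every entry, and hence the products and the trace, in the ring $\mathcal O_1$ of functions regular at $v=1$, and apply $\operatorname{ev}_1$, which fixes the $v$-free right side. The only difference is that you spell out bookkeeping the paper leaves implicit: working in $\mathcal O_1[\uvec]$ before substituting $u_i=E_i(X_N)$, and comparing coefficients inside $\mathcal O_1[x_1,\ldots,x_N]\subset L$.
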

\begin{proof}
Equations~\eqref{eq:traceevaluation} and~\eqref{eq:filter} give the identity
\begin{equation}\label{eq:trace-v}
 \tr\bigl(M_{\mu_s}(v;E(X_N))\cdots M_{\mu_1}(v;E(X_N))\bigr)
 =k^N(\rowMap_k\elementaryE_\mu)[\powersum_k](X_N)
\end{equation}
over the rational-function field containing $v$. The right side contains no $v$.

By Lemma~\ref{lem:limit}, every entry of every matrix $M_r(v;E(X_N))$ is regular at $v=1$. A finite product of matrices with entries regular at $v=1$ again has entries regular there, and the trace is a finite sum of diagonal entries. Hence the entire left side of~\eqref{eq:trace-v} can be evaluated at $v=1$. After evaluation, each factor $M_r(v;E(X_N))$ becomes $M_r(E(X_N))$. Then, we obtain exactly~\eqref{eq:traceidentity}.
\end{proof}
\begin{remark}
    Notice that the evaluation matrix $U$ and its inverse were used only to prove~\eqref{eq:trace-v} for an indeterminate $v$. We never need to specialize $U$ itself at $v=1$. This distinction matters because the restricted Macdonald basis need not have a well-behaved limit there term by term.
\end{remark}
\section{The coefficient formula and $\elementaryE$-positivity}\label{sec:proof}
We can now identify the coefficients of the walk generating function.

\begin{theorem}\label{thm:walk}
Let $k\ge2$, $n\ge1$, $\mu\vdash kn$, $\nu\vdash n$, and $N\ge n$. Then
\[
 c^{(k)}_{\mu\nu}=\frac{1}{k^N}
        \sum_{W\in\mathcal W_{k,N}(\mu,\nu)}\wt(W).
\]
\end{theorem}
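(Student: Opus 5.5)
The plan is to expand both sides of Proposition~\ref{prop:trace} as polynomials in $E_1(X_N),\ldots,E_N(X_N)$ and compare coefficients of $E_\nu(X_N)$.

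\emph{Left side.} Expand the trace of $M_{\mu_s}(\uvec)\cdots M_{\mu_1}(\uvec)$ as a polynomial in the formal variables $\uvec$. Each entry of $M_r(\uvec)$ is a sum over edges of size $r$ of $w_k(\eta/\lambda)u_{\xi'}$. The $(\lambda,\lambda)$ entry of the product is therefore the sum over walks from $\lambda$ back to $\lambda$ with step sizes $\mu_1,\ldots,\mu_s$ in chronological order. This order is the one fixed by the column-vector convention, since the rightmost factor acts first. Each walk contributes $\wt(W)u_{\nu(W)}$. Summing over $\lambda$ gives
\[
 \tr\bigl(M_{\mu_s}(\uvec)\cdots M_{\mu_1}(\uvec)\bigr)=\sum_{\nu}\Bigl(\sum_{W\in\mathcal W_{k,N}(\mu,\nu)}\wt(W)\Bigr)u_\nu .
\]
By \eqref{eq:walkdegree}, only $\nu\vdash n$ occur. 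Since labels are built from $u_1,\ldots,u_N$, every such $\nu$ has $\nu_1\le N$. Substituting $u_i=E_i(X_N)$ turns this into the left side of \eqref{eq:traceidentity}, with $u_\nu$ becoming $E_\nu(X_N)$.

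\emph{Right side.} Write $\rowMap_k\elementaryE_\mu=\sum_{\nu\vdash n}c^{(k)}_{\mu\nu}\elementaryE_\nu$. Applying the plethysm $[\powersum_k]$ in $N$ variables gives $\sum_\nu c^{(k)}_{\mu\nu}E_\nu(X_N)$. Here we must check that no information is lost when passing to $N$ variables: $\elementaryE_i[\powersum_k](X_N)$ vanishes for $i>N$, so we need $\nu_1\le N$ for every $\nu\vdash n$. This holds because $\nu_1\le n\le N$, and this is exactly where the hypothesis $N\ge n$ enters. Thus every $E_\nu(X_N)$ with $\nu\vdash n$ is a genuine monomial in $E_1,\ldots,E_N$, and no coefficient is dropped.

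\emph{Comparison.} Multiply the right side by $k^N$ and equate it with the left side. Since $E_1,\ldots,E_N$ are algebraically independent (Section~\ref{sec:prelim}), the monomials $E_\nu(X_N)$ with $\nu_1\le N$ are linearly independent over $\QQ$. Both sides are $\QQ$-linear combinations of these monomials, so their coefficients agree, giving $k^Nc^{(k)}_{\mu\nu}=\sum_{W}\wt(W)$.

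\emph{Expected obstacle.} The main point needing care is the bookkeeping in the left-side expansion. Walks must be counted with their starting vertex; this is the convention already fixed, since walks are not identified under rotation. Multiple edges are distinguished by $\eta$, which matches the sum over $\eta$ in the matrix entries. Everything else is already supplied by Proposition~\ref{prop:trace}.
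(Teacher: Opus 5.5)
Your proposal is correct and follows essentially the same route as the paper: you expand $\tr\bigl(M_{\mu_s}(\uvec)\cdots M_{\mu_1}(\uvec)\bigr)$ as the generating function of closed walks, substitute $u_i=E_i(X_N)$, and invoke Proposition~\ref{prop:trace}. You then compare coefficients of $E_\nu(X_N)$ using algebraic independence, with $N\ge n$ guaranteeing $\nu_1\le N$ for every $\nu\vdash n$.
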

\begin{proof}
Write $s=\ell(\mu)$ and abbreviate $M_r=M_r(\uvec)$. We first expand the matrix product directly. For fixed initial and final vertices $\lambda^0,\lambda^s\in\Rset$, the $(\lambda^s,\lambda^0)$ entry of $M_{\mu_s}\cdots M_{\mu_1}$
is
\begin{equation}\label{eq:matrix-entry-walk}
 \sum_{\lambda^1,\ldots,\lambda^{s-1}\in\Rset}
 (M_{\mu_s})_{\lambda^s,\lambda^{s-1}}
 \cdots
 (M_{\mu_1})_{\lambda^1,\lambda^0}.
\end{equation}
Each matrix entry is itself a sum over all edges of the indicated size between its two vertices. Expanding all these sums, a term of~\eqref{eq:matrix-entry-walk} is therefore exactly a choice of one edge of size $\mu_j$ from $\lambda^{j-1}$ to $\lambda^j$ for every $j$. In other words, it is a walk with successive step sizes $\mu_1,\ldots,\mu_s$.

The product of the scalar parts of the chosen edge entries is the product of the edge weights, namely $\wt(W)$. The product of the label monomials is
\[
 \prod_{j=1}^s u_{(\xi^j)'}=u_{\nu(W)}.
\]
Thus the $(\lambda^s,\lambda^0)$ entry is the generating function for all such walks from $\lambda^0$ to $\lambda^s$.

The trace sums the diagonal entries, so it imposes $\lambda^s=\lambda^0$. Therefore
\begin{equation}\label{eq:tracewalks}
 \tr\bigl(M_{\mu_s}(\uvec)\cdots M_{\mu_1}(\uvec)\bigr)
 =\sum_{W\text{ closed}}\wt(W)u_{\nu(W)}.
\end{equation}
The starting vertex is part of the data because each diagonal entry is counted separately. Equation~\eqref{eq:walkdegree} shows that every label $\nu(W)$ in this sum is a partition of $n$.

Now substitute
\[
u_i=E_i(X_N)=\elementaryE_i(x_1^k,\ldots,x_N^k).
\]
Proposition~\ref{prop:trace} identifies the left side of~\eqref{eq:tracewalks} with
$ k^N(\rowMap_k\elementaryE_\mu)[\powersum_k](X_N)$.
That is,
\begin{equation}\label{eq:trace-e-expansion}
 k^N\sum_{\rho\vdash n}c^{(k)}_{\mu\rho}E_\rho(X_N).
\end{equation}
On the right side of~\eqref{eq:tracewalks}, the same substitution sends $u_{\nu(W)}$ to $E_{\nu(W)}(X_N)$. Grouping closed walks according to their label therefore gives
\begin{equation}\label{eq:trace-walk-grouped}
 \sum_{\rho\vdash n}
 \left(\sum_{W\in\mathcal W_{k,N}(\mu,\rho)}\wt(W)\right)
 E_\rho(X_N).
\end{equation}

The polynomials $E_1(X_N),\ldots,E_N(X_N)$ are algebraically independent. Since $N\ge n$, every partition $\rho\vdash n$ satisfies $\rho_1\le n\le N$, so
\[
 E_\rho=\prod_jE_{\rho_j}
\]
is a monomial in these $N$ independent generators. Distinct partitions have different multiplicity sequences $m_i(\rho)$, hence give distinct monomials $\prod_{i=1}^N E_i^{m_i(\rho)}$.

Consequently the expansion in these monomials is unique. Comparing the coefficient of $E_\nu$ in~\eqref{eq:trace-e-expansion} and~\eqref{eq:trace-walk-grouped} yields
\[
 k^Nc^{(k)}_{\mu\nu}
 =\sum_{W\in\mathcal W_{k,N}(\mu,\nu)}\wt(W),
\]
which is the required formula.
\end{proof}

\begin{theorem}\label{thm:positive}
For every $k\ge1$, $n\ge0$, and $\mu\vdash kn$, the symmetric function $\rowMap_k(\elementaryE_\mu)$ is $\elementaryE$-positive. Equivalently, $c^{(k)}_{\mu\nu}\in\ZZ_{\ge0}$ for every $\nu\vdash n$.
\end{theorem}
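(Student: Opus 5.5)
The plan is to deduce Theorem~\ref{thm:positive} from Theorem~\ref{thm:walk}. Edge cases come first, since Theorem~\ref{thm:walk} assumes $k\ge2$ and $n\ge1$. If $k=1$, then $\rowMap_1$ is the identity on monomials, so $\rowMap_1(\elementaryE_\mu)=\elementaryE_\mu$, and $c^{(1)}_{\mu\nu}=\delta_{\mu\nu}\ge0$. If $n=0$, then $\mu=\varnothing$ and $\elementaryE_\varnothing=1=\monomial_\varnothing$, so $\rowMap_k(1)=1=\elementaryE_\varnothing$, and again the single coefficient is $1\ge0$.

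For $k\ge2$ and $n\ge1$, fix $\nu\vdash n$ and choose any $N\ge n$, for instance $N=n$. Theorem~\ref{thm:walk} gives
\[
 c^{(k)}_{\mu\nu}=\frac{1}{k^N}\sum_{W\in\mathcal W_{k,N}(\mu,\nu)}\wt(W).
\]
Each $\wt(W)$ is a finite product of edge weights $w_k(\eta^j/\lambda^{j-1})$. By Lemma~\ref{lem:weights}, each such weight lies in $\QQ_{>0}$, so every walk weight is a positive rational number. The set $\mathcal W_{k,N}(\mu,\nu)$ is finite, because the graph has $k^N$ vertices and finitely many edges of each size. Hence the right-hand side is a nonnegative rational number; it is zero exactly when no closed walk carries the label $\nu$.

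Integrality is already in hand: the remark after Lemma~\ref{lem:projection} shows that $c^{(k)}_{\mu\nu}\in\ZZ$, because $\rowMap_k$ preserves $\Lambda_\ZZ$ and $\{\elementaryE_\nu\}$ is a $\ZZ$-basis. A nonnegative rational number that is also an integer lies in $\ZZ_{\ge0}$. Since $\nu$ was arbitrary, $\rowMap_k(\elementaryE_\mu)$ is $\elementaryE$-positive.

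There is no serious obstacle, since all the substantive work sits in Theorem~\ref{thm:walk} and Lemma~\ref{lem:weights}. The only points needing care are the boundary cases $k=1$ and $n=0$, which fall outside the hypotheses of Theorem~\ref{thm:walk} and are handled directly above. The restriction $N\ge n$ is harmless, because we are free to choose $N$.
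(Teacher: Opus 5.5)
Your proposal is correct and follows the paper's own proof. You handle $k=1$ and $n=0$ directly, and otherwise you combine Theorem~\ref{thm:walk} with Lemma~\ref{lem:weights} to get nonnegativity, taking integrality from the remark after Lemma~\ref{lem:projection}.
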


\begin{proof}
For $k=1$, the map $\rowMap_k$ is the identity.
For $n=0$, it sends $1$ to $1$.
In all other cases, Theorem~\ref{thm:walk} and
Lemma~\ref{lem:weights} give
$c^{(k)}_{\mu\nu}\ge0$.
These coefficients are integers, as noted in
Section~\ref{sec:intro}.
Hence $c^{(k)}_{\mu\nu}\in\ZZ_{\ge0}$, then we are done.
\end{proof}

Let $\omega$ be the involution of $\Lambda$ with $\omega(e_r)=h_r$, and define symmetric functions $A_{n,k}$ by
\begin{equation}\label{eq:Ank}
\sum_{n\ge0}\frac{A_{n,k}\,u^n}{(kn)!}=\prod_{i\ge1}\Bigl(\sum_{m\ge0}\frac{(-1)^m x_i^m u^m}{(km)!}\Bigr)^{-1}.
\end{equation}
Alexandersson and Dai proved that $\omega\rowMap_k(e_{1^{kn}})=A_{n,k}$ \cite[Theorem 6]{AlexanderssonDai2026}. The $h$-positivity of $A_{n,k}$ for general $k$ was presented by Stanley in a 2025 talk on joint work with Amdeberhan and Shareshian \cite{AmdeberhanShareshianStanley2026}; see also \cite[Theorem 5]{AlexanderssonDai2026}. 

\begin{corollary}
For all $n,k\ge1$, the symmetric function $A_{n,k}$ is a linear combination of the $h_\lambda$ with nonnegative integer coefficients.
\end{corollary}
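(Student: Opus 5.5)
The corollary should follow directly from Theorem~\ref{thm:positive}, combined with the identity $\omega\rowMap_k(\elementaryE_{1^{kn}})=A_{n,k}$ of \cite[Theorem 6]{AlexanderssonDai2026}. First I take $\mu=(1^{kn})\vdash kn$ in Theorem~\ref{thm:positive}. This gives
\[
 \rowMap_k(\elementaryE_{1^{kn}})=\sum_{\nu\vdash n}c^{(k)}_{(1^{kn})\nu}\,\elementaryE_\nu,
 \qquad c^{(k)}_{(1^{kn})\nu}\in\ZZ_{\ge0}.
\]
Next I apply $\omega$. It is a ring involution with $\omega(\elementaryE_r)=\completeH_r$, so it is multiplicative and sends $\elementaryE_\nu$ to $\completeH_\nu$. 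Hence
\[
 A_{n,k}=\omega\rowMap_k(\elementaryE_{1^{kn}})=\sum_{\nu\vdash n}c^{(k)}_{(1^{kn})\nu}\,\completeH_\nu,
\]
which is a nonnegative integer combination of the $\completeH_\lambda$.

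Two points need care. The first is that the range of the corollary is covered. It requires $n,k\ge1$, and Theorem~\ref{thm:positive} already includes $k=1$, where $\rowMap_1$ is the identity and $A_{n,1}=\omega(\elementaryE_1^n)=\completeH_1^n$; it also includes $n=0$, although the corollary does not need it. The second is that the identification of $A_{n,k}$ with $\omega\rowMap_k(\elementaryE_{1^{kn}})$, as defined by the generating function~\eqref{eq:Ank}, is taken from \cite{AlexanderssonDai2026}. Since earlier cited results may be assumed, there is no real obstacle here.

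If a self-contained check of that identity were wanted, I would compute $\rowMap_k$ on the exponential generating function $\prod_i e^{x_iz}$, whose coefficient of $z^{m}/m!$ is $\elementaryE_1^m$ in its monomial form. I would filter it with Lemma~\ref{lem:projection} to the terms where every exponent is divisible by $k$, giving $\prod_i\sum_m x_i^{km}z^{km}/(km)!$ with $x_i^k$ replaced by $x_i$. Applying $\omega$ then turns this into the inverse-product form~\eqref{eq:Ank}, via the relation $\omega\bigl(\prod_i f(x_i)\bigr)=\prod_i f(-x_i)^{-1}$ for such multiplicative generating functions. This sign-and-inversion bookkeeping is the only step where I expect possible friction. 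The main argument cites the identity instead.
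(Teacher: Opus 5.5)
Your proposal is correct and is essentially the paper's own argument: specialize Theorem~\ref{thm:positive} to $\mu=(1^{kn})$, apply the multiplicative involution $\omega$ to turn each $\elementaryE_\nu$ into $\completeH_\nu$, and invoke the cited identity $\omega\rowMap_k(\elementaryE_{1^{kn}})=A_{n,k}$. Your optional self-contained derivation of that identity, via the exponential generating function and $\omega\bigl(\prod_i f(x_i)\bigr)=\prod_i f(-x_i)^{-1}$, also checks out, though the paper simply cites the identity.
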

\begin{proof}
Apply $\omega$ to Theorem~\ref{thm:positive} for $\mu=(1^{kn})$ and use $\omega\rowMap_k(e_{1^{kn}})=A_{n,k}$.
\end{proof}

\begin{corollary}\label{cor:zero}
If $\mu\vdash kn$ and $\mu_1>n$, then $\rowMap_k(e_\mu)=0$.
\end{corollary}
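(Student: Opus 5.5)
We plan to use the walk formula of Theorem~\ref{thm:walk} and show that $\mathcal W_{k,N}(\mu,\nu)$ is empty for every $\nu\vdash n$ as soon as $\mu_1>n$. Take $N=n$, which satisfies $N\ge n$. By Lemma~\ref{lem:edge-coordinates}, an edge of size $r$ is given by a strip vector $\delta\in\{0,1\}^N$ with $\sum_i\delta_i=r$. In particular there are no edges of size $r>N$, as noted after Definition~\ref{def:edge}, so the matrix $M_{\mu_1}(\uvec)$ is the zero matrix whenever $\mu_1>N=n$.

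The first step is to examine which walks are counted. A closed walk with step sizes $\mu_1,\ldots,\mu_s$ must begin with an edge of size $\mu_1$. With $N=n<\mu_1$ no such edge exists, so $\mathcal W_{k,n}(\mu,\nu)=\varnothing$ for every $\nu\vdash n$. Equivalently, the trace in~\eqref{eq:tracewalks} contains the zero factor $M_{\mu_1}$ and therefore vanishes.

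Theorem~\ref{thm:walk} then gives $c^{(k)}_{\mu\nu}=0$ for all $\nu\vdash n$. Since $\rowMap_k(\elementaryE_\mu)$ is homogeneous of degree $n$ and the $\elementaryE_\nu$ with $\nu\vdash n$ form a basis of that degree, we conclude $\rowMap_k(\elementaryE_\mu)=0$.

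The degenerate cases need a separate check. If $n=0$ then $\mu=\varnothing$, so $\mu_1>n$ cannot occur. For $k=1$, Theorem~\ref{thm:walk} assumes $k\ge2$, so we argue directly: $\mu\vdash n$ forces $\mu_1\le n$, and again the hypothesis is vacuous.

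An alternative route avoids the walk formula and also serves as a check. Use Proposition~\ref{prop:trace} with $N=n$: the left side there contains $M_{\mu_1}(E(X_n))=0$, so $(\rowMap_k\elementaryE_\mu)[\powersum_k](X_n)=0$. A more elementary version of the same fact is that $\elementaryE_{\mu_1}(x_1,\ldots,x_n)=0$, so $\elementaryE_\mu[X_n]=0$ and hence $\Phi_k(\elementaryE_\mu[X_n])=0$ by Lemma~\ref{lem:projection}.

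To pass from $n$ variables back to $\Lambda$, we need injectivity of $f\mapsto f[\powersum_k](X_n)$ on degree-$n$ symmetric functions. This is the only subtle point. It follows from the linear independence of the monomials $E_\nu(X_n)$ with $\nu_1\le n$, which was established in Section~\ref{sec:prelim}, since every $\nu\vdash n$ has $\nu_1\le n$.
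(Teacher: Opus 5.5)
Your proof is correct, but your main route is not the one the paper uses for this corollary. It is the observation the paper records in one sentence after the proof: with $N=n$ there is no edge of size $\mu_1>N$, so $M_{\mu_1}(\uvec)=0$ and Theorem~\ref{thm:walk} gives $c^{(k)}_{\mu\nu}=0$. The paper's actual proof is a direct degree count that does not use the Macdonald machinery. Every monomial of $\elementaryE_\mu$ involves at least $\mu_1$ distinct variables. A monomial surviving $\Phi_k$ has every exponent on its support at least $k$, so it would have degree at least $k\mu_1>kn=\deg\elementaryE_\mu$. Hence $\Phi_k(\elementaryE_\mu[X_N])=0$ for every $N$, and Lemma~\ref{lem:projection} together with injectivity of $f\mapsto f[\powersum_k]$ finishes the argument. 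Your walk-formula argument is valid: with $N=n$ the hypotheses $k\ge2$, $n\ge1$, $N\ge n$ of Theorem~\ref{thm:walk} hold, and you correctly show that $k=1$ and $n=0$ are vacuous. Its drawback is that it makes an elementary vanishing depend on the whole root-of-unity factorization and trace identity, whereas the paper's count needs nothing beyond Section~\ref{sec:prelim}. Your ``more elementary version'' is the cleanest of the three arguments. Choosing $N=n$, which lies in the window $n\le N<\mu_1$, makes $\elementaryE_{\mu_1}[X_n]$ vanish identically, so no degree count is needed. You also make explicit the injectivity step the paper only cites: $f\mapsto f[\powersum_k](X_n)$ is injective on degree-$n$ symmetric functions because the $E_\nu(X_n)$, $\nu\vdash n$, are linearly independent. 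One small wording point: $\Phi_k(\elementaryE_\mu[X_n])=0$ is immediate from $\elementaryE_\mu[X_n]=0$. Lemma~\ref{lem:projection} is what converts it into $(\rowMap_k\elementaryE_\mu)[\powersum_k](X_n)=0$.
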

\begin{proof}
Write $\mu=(\mu_1,\ldots,\mu_s)$. A monomial in
\[
\elementaryE_\mu=\elementaryE_{\mu_1}\elementaryE_{\mu_2}\cdots \elementaryE_{\mu_s}
\]
is obtained by choosing, for each factor $\elementaryE_{\mu_j}$, a set of $\mu_j$ distinct variables. In particular, the first factor $e_{\mu_1}$ alone uses $\mu_1$ distinct variables, so every monomial appearing in the product has support of size at least $\mu_1$.

Suppose a monomial survives $\Phi_k$. Every exponent on its support is then a positive multiple of $k$, and therefore at least $k$. Since there are at least $\mu_1$ variables in the support, the total degree is at least $ k\mu_1$.
But $\elementaryE_\mu$ is homogeneous of degree $ |\mu|=kn$.

If $\mu_1>n$, then $k\mu_1>kn$, a contradiction. Hence $\Phi_k(\elementaryE_\mu)=0$. Lemma~\ref{lem:projection} and injectivity of the substitution $f\mapsto f[\powersum_k]$ imply $\rowMap_k(\elementaryE_\mu)=0$.
\end{proof}
The graph gives the same conclusion with $N=n$: an edge of size $\mu_1>N$ does not exist.

\begin{example}
Let $k=N=2$. The restricted partitions are $\varnothing,(1),(1,1),(2,1)$. Every edge of size $1$ is listed below; $\eta$ is included to distinguish the vertical-strip operation from the endpoint of the edge.
\begin{table}[ht]
\centering
\caption{All edges of size $1$ for $k=N=2$.}
\begin{tabular}{ccccc}
\toprule
Initial vertex $\lambda$ & Partition $\eta$
& Final vertex $\tau$ & Weight & Label\\
\midrule
$\varnothing$ & $(1)$ & $(1)$ & $1$ & $1$\\
$(1)$ & $(2)$ & $\varnothing$ & $1$ & $u_1$\\
$(1)$ & $(1,1)$ & $(1,1)$ & $2$ & $1$\\
$(1,1)$ & $(2,1)$ & $(2,1)$ & $1$ & $1$\\
$(2,1)$ & $(3,1)$ & $(1,1)$ & $1$ & $u_1$\\
$(2,1)$ & $(2,2)$ & $\varnothing$ & $2$ & $u_2$\\
\bottomrule
\end{tabular}
\label{tab:size-one-edges}
\end{table}

All edges of size $1$.
The table can be reconstructed directly from the definitions. From $\varnothing$ there is only one legal addition, giving $\eta=(1)$ and endpoint $(1)$. From $(1)$ there are two legal additions: $\eta=(2)=\varnothing+2(1)$ gives the edge to $\varnothing$ with label $u_1$, while $\eta=(1,1)$ is already restricted and gives the edge to $(1,1)$. The latter edge has weight $2$: its only contributing pair of rows is $(1,2)$, with $D=m=1$, which gives $\beta_2(1;0,1)\beta_2(2;2,1)=2$. From $(1,1)$ the only legal size-$1$ addition is $\eta=(2,1)$, giving the edge to $(2,1)$. Finally, from $(2,1)$ the two legal additions are
\[
 (3,1)=(1,1)+2(1),
 \qquad
 (2,2)=\varnothing+2(1,1).
\]
They yield, respectively, an edge to $(1,1)$ with label $u_1$ and an edge to $\varnothing$ with label $u_2$. The latter has weight $2$, again from the single pair $(1,2)$ with $D=m=1$.

In the displayed vertex order, the transition matrices are
\[
 M_1=\begin{pmatrix}
 0&u_1&0&2u_2\\1&0&0&0\\0&2&0&u_1\\0&0&1&0
 \end{pmatrix},
 \qquad
 M_2=\begin{pmatrix}
 0&0&u_2&0\\0&0&0&u_2\\1&0&0&0\\0&1&0&0
 \end{pmatrix}.
\]

All edges of size $2$.
Since $N=2$, a vertical $2$-strip must add one cell to each of the two rows. Thus each state has at most one outgoing edge of size $2$. Starting from $\varnothing$ gives $\eta=(1,1)$ and endpoint $(1,1)$; starting from $(1)$ gives $\eta=(2,1)$ and endpoint $(2,1)$. Starting from $(1,1)$ gives
\[
 \eta=(2,2)=\varnothing+2(1,1),
\]
so the endpoint is $\varnothing$ and the label is $u_2$. Starting from $(2,1)$ gives
\[
 \eta=(3,2)=(1)+2(1,1),
\]
so the endpoint is $(1)$ and again the label is $u_2$. In all four cases the weight is $1$, which gives the displayed matrix $M_2$.

Four-step closed walks and $\rowMap_2(e_1^4)$.
For step sizes $(1,1,1,1)$, we can list the closed walks explicitly. The label $(1,1)$ is obtained by using the two $u_1$-edges and no $u_2$-edge. There is one such closed walk from each initial vertex:
\[
\begin{array}{rcl}
\varnothing&\to&(1)\to\varnothing\to(1)\to\varnothing,\\
(1)&\to&\varnothing\to(1)\to\varnothing\to(1),\\
(1,1)&\to&(2,1)\to(1,1)\to(2,1)\to(1,1),\\
(2,1)&\to&(1,1)\to(2,1)\to(1,1)\to(2,1).
\end{array}
\]
In each case the two labeled edges have weight $1$, and the remaining two edges also have weight $1$, so every walk has total weight $1$. Their total contribution is therefore $4u_1^2$.

The label $(2)$ is obtained by traversing once around the four-cycle
\[
\varnothing\longrightarrow(1)\longrightarrow(1,1)       \longrightarrow(2,1)\longrightarrow\varnothing.
\]
There are four closed walks because any of the four vertices may be chosen as the initial vertex. The successive edge weights are a cyclic rotation of
\[
 1,\ 2,\ 1,\ 2,
\]
so each walk has total weight $4$. The unique labeled edge in one traversal contributes $u_2$, and the total contribution is $16u_2$. Consequently
\[
 \tr(M_1^4)=4u_1^2+16u_2,
 \qquad \rowMap_2(\elementaryE_1^4)=\elementaryE_{11}+4\elementaryE_2.
\]
 The four choices of initial vertex are counted separately; the factor $4$ in Theorem~\ref{thm:walk} comes from the root of unity filter, not from an identification of walks under rotation.

To see the normalization explicitly, substitute $u_1=E_1=\elementaryE_1[\powersum_2]$ and $u_2=E_2=\elementaryE_2[\powersum_2]$. Since $k^N=2^2=4$, the trace identity gives
\[
 4(\rowMap_2(e_1^4))[\powersum_2]=4E_1^2+16E_2.
\]
Dividing by $4$ and using injectivity of $f\mapsto f[\powersum_2]$ gives
\[
\rowMap_2(\elementaryE_1^4)=\elementaryE_{11}+4\elementaryE_2.
\]
The normalization factor $4$ comes from the root of unity average, not from quotienting the four cyclic choices of starting vertex.

The same matrices satisfy
\[
 M_1^2=u_1I+2M_2,
 \qquad
 M_2^2=u_2I.
\]
Since $M_2$ has zero diagonal, $\tr(M_2)=0$, while $I$ is a $4\times4$ identity matrix and therefore
\[
 \tr(M_2^2)=\tr(u_2I)=4u_2.
\]
For $\mu=(2,1,1)$ we have
\begin{align*}
 \tr(M_2M_1^2)
 =\tr\bigl(M_2(u_1I+2M_2)\bigr)
 =u_1\tr(M_2)+2\tr(M_2^2)
 =8u_2.
\end{align*}
After substituting $u_2=E_2$ and using the trace identity,
\[
8E_2=4(\rowMap_2(\elementaryE_2\elementaryE_1^2))[\powersum_2],\qquad
\rowMap_2(\elementaryE_2\elementaryE_1^2)=2\elementaryE_2.
\]
Likewise, $\tr(M_2^2)=4u_2$ gives
\[
 4E_2=4(\rowMap_2(\elementaryE_2^2))[\powersum_2],\qquad
\rowMap_2(\elementaryE_2^2)=\elementaryE_2.
\]
\end{example}

\textbf{AI assistance.} During the preparation of this work the authors used GPT (OpenAI) and Lean formalization in order to assist with
language editing, proof development, and the drafting of parts of the paper. After using these tools, the authors reviewed and
edited the content and take full responsibility for the publication.

\bibliographystyle{alpha}
\bibliography{theBib}

\end{document}